\numberwithin{equation}{section}
\theoremstyle{plain}
\newtheorem{teo}{Theorem}[section]
\newtheorem{lema}[teo]{Lemma}
\newtheorem{coro}[teo]{Corollary}
\newtheorem{prop}[teo]{Proposition}
 \theoremstyle{definition}
\newtheorem{conj}[teo]{Conjecture}
\newtheorem{obs}[teo]{Remark}
\newtheorem{?}[teo]{Problem}
\newcommand{\rk}{\operatorname{rk}}
\begin{document}

\title{On the Ehrhart Polynomial of Minimal Matroids}

\author[L. Ferroni]{Luis Ferroni}
\thanks{The author is supported by the Marie Sk{\l}odowska-Curie PhD fellowship as part of the program INdAM-DP-COFUND-2015, Grant Number 713485.}

\address{Universit\`a di Bologna, Dipartimento di Matematica, Piazza di Porta San Donato, 5, 40126 Bologna BO - Italia} 

\email{ferroniluis@gmail.com\\ luis.ferronirivetti2@unibo.it}

\subjclass[2010]{05B35, 52B20, 11B73}

\begin{abstract} 
	We provide a formula for the Ehrhart polynomial of the connected matroid of size $n$ and rank $k$ with the least number of bases, also known as a \textit{minimal matroid}. We prove that their polytopes are Ehrhart positive and $h^*$-real-rooted (and hence unimodal). We prove that the operation of circuit-hyperplane relaxation relates minimal matroids and matroid polytopes subdivisions, and also preserves Ehrhart positivity. We state two conjectures: that indeed all matroids are $h^*$-real-rooted, and that the coefficients of the Ehrhart polynomial of a connected matroid of fixed rank and cardinality are bounded by those of the corresponding minimal matroid and the corresponding uniform matroid.\\
	
	\smallskip
    \noindent {\scshape Keywords.} Matroid polytopes, Ehrhart polynomials, Lattice Path Matroids, Real-rooted polynomials.
\end{abstract}

\maketitle

\section{Introduction} 

In the article \cite{deloera} De Loera, Haws and K\"oppe posed the following conjecture:

\begin{conj}\label{deloera}
	The Ehrhart polynomial of every matroid polytope has positive coefficients and the $h^*$-vector is unimodal.
\end{conj}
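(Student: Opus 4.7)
The plan is to attack the two halves of the conjecture in parallel, leveraging the tools announced in the abstract and the general structural theory of matroid polytopes.

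For the Ehrhart positivity claim, a first reduction is to the connected case: if $M = M_1 \oplus M_2$, then the matroid polytope is a Cartesian product of the two factors, so the Ehrhart polynomial factors as a product, and positivity transfers from the summands to the direct sum. Once reduced to a connected matroid of fixed rank $k$ and ground set size $n$, the strategy is to interpolate between the two extremal objects of the same combinatorial type: the minimal matroid $T_{n,k}$ and the uniform matroid $U_{n,k}$. Ehrhart positivity for $U_{n,k}$ is classical and for $T_{n,k}$ is the main result of this paper, so a proof of the coefficient-bounds conjecture stated in the abstract would immediately yield positivity for every connected matroid of rank $k$ and size $n$. To establish those bounds themselves, I would proceed by repeated circuit-hyperplane relaxation: each relaxation moves us upward toward $U_{n,k}$ while preserving Ehrhart positivity (by the relaxation theorem of the paper); a dual argument controlling inverse relaxations downward toward $T_{n,k}$ would then complete an induction on the number of bases.

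For the $h^*$-unimodality claim, the natural approach is to prove the stronger property of $h^*$-real-rootedness, since a real-rooted polynomial with nonnegative coefficients is automatically unimodal. The base case of the minimal matroid is supplied by the paper. One would then write the $h^*$-polynomial of a relaxation $\widetilde{M}$ in terms of that of $M$, the difference being controlled by the Ehrhart polynomial of the relaxed circuit-hyperplane (which is itself a hypersimplex), and apply a standard compatibility lemma for interlacing families to inherit real-rootedness along a relaxation sequence.

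The principal obstacle is the gap between the matroids reachable by iterated circuit-hyperplane relaxations and the entire matroid universe: many matroids contain no circuit-hyperplane at all, so a single inductive ladder will not visit every matroid of rank $k$ on $n$ elements. Overcoming this likely requires combining relaxations with other polytopal moves such as matroid subdivisions into Schubert or lattice path matroids, or producing a direct hypersimplex decomposition of an arbitrary matroid polytope whose Ehrhart contributions can be tracked with signs. The real-rootedness question is still more delicate, since interlacing arguments demand a rigid algebraic setup that matroid relaxations do not obviously satisfy; I expect that even partial progress on the bounds conjecture will be the most productive first step toward the full statement.
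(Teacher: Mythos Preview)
The statement you are attempting is Conjecture~\ref{deloera}: it is \emph{not} proved in the paper, nor anywhere else. The paper only offers partial evidence---Ehrhart positivity and $h^*$-real-rootedness for the minimal matroids $T_{k,n}$, and preservation of positivity under circuit-hyperplane relaxation---and explicitly records the full statement as open. So there is no ``paper's own proof'' to compare against, and your proposal is a research outline rather than a proof; to your credit, you say as much in the last paragraph.

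Even as an outline there are concrete gaps beyond the one you flag. First, relaxation does not give a connected walk through $\mathscr{C}(k,n)$: most matroids have no circuit-hyperplane, there is no well-defined ``inverse relaxation toward $T_{k,n}$,'' and $T_{k,n}$ is not in general reachable from an arbitrary $M$ by any sequence of such moves, so the induction on the number of bases never gets started. Second, a small factual slip: under relaxation the paper shows $h^*(\widetilde{M},x)=h^*(M,x)+x\,h^*(T_{k,n},x)$, so the correction term is governed by the minimal matroid, not by a hypersimplex as you state; and real-rootedness of $h^*(M,x)$ and of $h^*(T_{k,n},x)$ separately does not imply real-rootedness of that sum without an interlacing relation you have not supplied. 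Finally, Ehrhart positivity of $U_{k,n}$ is not classical---it is the recent result cited in the introduction---so appealing to it as a black box is fine, but it should not be described as folklore.
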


The main reason behind this hypotheses, according to the authors, was computational evidence provided by the software {\tt LattE} \cite{latte}.

Further evidence in favor of the first part of this conjecture was established recently in \cite{ferroni}, where the following result is proven:

\begin{teo}
	The Ehrhart polynomial of the matroid polytopes of every uniform matroid has positive coefficients.
\end{teo}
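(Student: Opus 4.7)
The matroid polytope of the uniform matroid $U_{k,n}$ is the hypersimplex
$\Delta_{k,n} = \{x \in [0,1]^n : x_1 + \cdots + x_n = k\}$.
The plan is to obtain a closed formula for $L_{\Delta_{k,n}}(t)$, expand it in the monomial basis, and show directly that each coefficient is positive.

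The first step is a routine application of inclusion-exclusion. A lattice point in $t\Delta_{k,n}$ is a vector $(x_1,\dots,x_n) \in \mathbb{Z}_{\ge 0}^n$ with $x_i \le t$ and $\sum x_i = kt$; subtracting off the configurations where $x_i \ge t+1$ on each prescribed subset gives
\[
L_{\Delta_{k,n}}(t) = \sum_{j=0}^{k} (-1)^j \binom{n}{j} \binom{(k-j)t + n - 1 - j}{n-1}.
\]
Each summand is a polynomial in $t$ of degree $n-1$. Using the expansion
$\binom{at+b}{n-1} = \frac{1}{(n-1)!}\prod_{\ell=0}^{n-2}(at+b-\ell)$
and reorganising by powers of $t$, one obtains an explicit (though alternating) formula for the coefficient $[t^i]\,L_{\Delta_{k,n}}(t)$ as a double sum involving elementary symmetric functions in $\{n-1-j-\ell\}_\ell$ weighted by $(k-j)^i$.

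The heart of the argument, and the principal obstacle, is proving that this alternating sum is positive for every $0 \le i \le n-1$. I would pursue this in three parallel directions. The first is to look for a combinatorial interpretation of the coefficient, perhaps as counting some family of restricted compositions, lattice paths, or tableaux — once such a model is found, positivity becomes automatic. The second is a purely algebraic route: to manipulate the inclusion-exclusion expression using standard binomial/Stirling identities in the hope that the signs collapse, leaving a single positive product or a positive sum. The third is induction on $n$ (or on $k$), exploiting the duality $\Delta_{k,n} \cong \Delta_{n-k,n}$ to halve the range of $k$ one needs to consider, together with a recursion coming from slicing the hypersimplex by a coordinate hyperplane.

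I would first verify the strategy on the base cases $k=1$ (where $\Delta_{1,n}$ is the standard simplex and $L(t) = \binom{t+n-1}{n-1}$, manifestly Ehrhart positive) and $k=2$, and also compute low-degree coefficients (leading coefficient is the normalised volume, constant term is $1$, and the subleading one relates to the lattice surface area), to guess the general closed form. The most delicate part will be the "middle" coefficients $[t^i]$ for $i$ around $n/2$, where cancellation in the alternating sum is greatest; these will likely dictate which of the three strategies above actually succeeds.
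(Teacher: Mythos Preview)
This theorem is not proved in the present paper; it is quoted from \cite{ferroni} as background. The only trace of its proof visible here is the formula for $e_{k,n,m}$ in the reformulated Conjecture~\ref{cota}: the coefficient $[t^m]\,i(U_{k,n},t)$ is written as a manifestly positive sum involving Eulerian numbers and the \emph{weighted Lah numbers} introduced in that reference. So the comparison you are being asked to make is against that external argument.

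Against that benchmark, your proposal is not a proof but a research plan. You correctly write down the inclusion--exclusion formula for the hypersimplex and correctly identify the difficulty (the alternating signs), but you then list three heuristic directions without carrying any of them through. None of your three routes is shown to close: you do not produce the combinatorial model, you do not exhibit the identity that collapses the signs, and you do not state or verify the inductive step. The phrase ``these will likely dictate which of the three strategies above actually succeeds'' is an explicit admission that you have not yet determined whether any of them does. As written, this is an outline of how one might begin, not a proof.

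The actual argument in \cite{ferroni} follows, in spirit, your second direction: it rewrites the coefficient as a single positive sum, but the key ingredient is the introduction of a new combinatorial quantity (the weighted Lah numbers $W(\ell,n,m+1)$) tailored to absorb the alternation. That definition is the missing idea; without it, the ``standard binomial/Stirling identities'' you allude to do not by themselves eliminate the signs.
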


In \cite{fuliu} Castillo and Liu conjectured something stronger regarding Ehrhart positivity: that all \textit{generalized permutohedra} are Ehrhart positive. The validity of that conjecture implies that all matroids are Ehrhart positive, since it is known that matroid polytopes are a subfamily of generalized permutohedra (see for example \cite{ardila}).\\

In this article we are going to support Conjecture \ref{deloera} by proving that a certain infinite family of matroids with nice properties does satisfy that assertion. Moreover, we prove that in this case such matroids have polytopes that are $h^*$-real-rooted. This fact motivated further computations on several matroids. We verified using {\tt LattE} that the following matroids are $h^*$-real-rooted.
	\begin{itemize}
		\item The matroid polytope of every matroid with up to $9$ elements.
		\item The matroid polytope of every uniform matroid (every \textit{hypersimplex}) with up to $200$ elements.
		\item All matroids listed in \cite{deloera}.
		\item All \textit{snake}-matroids \cite{knauer1} of the form $S(a_1,\ldots,a_{\ell})$ for $a_1+\ldots+a_{\ell}\leq 22$.
		\item All Lattice Path Matroids \cite{bonin} with up to $12$ elements. 
	\end{itemize}
Therefore, since in such a case real-rootedness implies log-concavity, and this in turn implies unimodality, we can state a more general conjecture:

\begin{conj}\label{realrootedness}
	The $h^*$-polynomial of every matroid polytope is real-rooted. 
\end{conj}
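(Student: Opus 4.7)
The plan is to split the conjecture into (i) a reduction to connected matroids via direct sum, and (ii) an inductive argument on connected matroids using the operations highlighted in this paper.

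For (i), if $M = M_1 \oplus M_2$ then the matroid polytope $P(M)$ is the Cartesian product $P(M_1)\times P(M_2)$. The Ehrhart polynomial factors as a product, but the $h^*$-polynomial of a Cartesian product is only a shuffle of the two factors and is not automatically real-rooted even when both factors are. Before attacking the general case, I would therefore want to check whether the conjecture is stable under direct sums in the stronger sense of interlacing; if it is not, then the conjecture must really be treated for connected matroids directly.

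For (ii), the natural inductive tools are deletion, contraction, and the circuit-hyperplane relaxation studied in this paper. Relaxation is especially appealing because the paper already leverages the explicit $h^*$-polynomial of a minimal matroid to control Ehrhart positivity under relaxation. My approach would be to express
\[ h^*_{M'}(t) \;=\; h^*_M(t) + \Delta(t), \]
where $M'$ is the relaxation of $M$ along some circuit-hyperplane and $\Delta(t)$ is a ``local'' correction capturing the new vertex of $P(M')$ together with the cone around it; the paper's formula for minimal matroids strongly suggests that $\Delta$ is, up to a shift, the $h^*$-polynomial of a smaller minimal matroid. Given such a formula, one would then try to show that $h^*_M$ and $h^*_{M'}$ interlace. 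By the standard theory of interlacing polynomials, real-rootedness would propagate along any chain of relaxations, with the uniform matroid as a natural terminal object.

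The principal obstacle is twofold. First, not every connected matroid of rank $k$ on $n$ elements can be obtained from the minimal one by a chain of circuit-hyperplane relaxations, so an interlacing argument based purely on relaxation cannot cover the whole family; one would have to supplement it with moves based on deletion--contraction or series--parallel extension, and check that each such move preserves $h^*$-real-rootedness. Second, and more fundamentally, showing that $h^*_M(t)$ and $h^*_M(t) + \Delta(t)$ interlace requires quantitative information about the zeros of $h^*_M$ going well beyond the positivity and unimodality that the techniques of the paper deliver; obtaining this root-level information is where any concrete attempt is most likely to stall, and it is also what the abundance of computational evidence cited in the excerpt does not yet illuminate.
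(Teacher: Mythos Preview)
The statement you are attempting to prove is labeled in the paper as a \emph{Conjecture}, and the paper offers no proof of it. What the paper does is (a) list computational evidence (all matroids on at most $8$ elements, hypersimplices up to $120$ elements, the examples from \cite{deloera}, and small snakes), and (b) establish the special case of the minimal matroids $T_{k,n}$ in Theorem~\ref{realroot}. So there is no ``paper's own proof'' to compare against; the assertion is open.

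Your proposal is, accordingly, not a proof but a strategic sketch, and you are candid about its gaps. A couple of points of contact with the paper are worth noting. First, your speculation about the correction term $\Delta(t)$ under relaxation is confirmed precisely by the Corollary following the relaxation theorem: one has
\[
h^*(\widetilde{M},t) \;=\; h^*(M,t) + t\cdot h^*(T_{k,n},t),
\]
so $\Delta(t) = t\cdot h^*(T_{k,n},t)$, not the $h^*$-polynomial of a \emph{smaller} minimal matroid but $t$ times that of the same-size one. Second, the obstacles you name are exactly the ones that block the paper from going further: relaxation does not connect all connected matroids of given rank and size (so an induction along relaxation chains cannot reach every matroid), and turning the additive identity above into an interlacing statement would already require root information about $h^*(M,t)$ that one does not have a priori. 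Your remark about direct sums is also well taken: Ehrhart polynomials multiply, but the passage from a product of Ehrhart polynomials to the $h^*$-polynomial of the product polytope is a Hadamard-type operation on generating series, and real-rootedness is not obviously preserved.

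In short, what you have written is a reasonable outline of where one might look, with the difficulties honestly flagged; it is not a proof, and neither is anything in the paper.
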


Since there are several results regarding real-rootedness of polynomials in combinatorics, see for instance Petter Br\"and\'en's articles \cite{branden2} and \cite{branden1} and more recently  \cite{branden3} joint with L. Solus, Jochemko's \cite{jochemko} and \cite{beckjochemko} by Beck, Jochemko and McCullough, it seems that approaching this Conjecture from that point of view may allow to use much more machinery.\\

To begin our discussion, we recall that a matroid $M$ is said to be \textit{connected} if it cannot be decomposed as a direct sum of smaller matroids. It is known (see for instance \cite{feichtner}) that the dimension of the matroid polytope of a matroid $M$ is $|M|-c(M)$ where $|M|$ is the cardinality of the ground set of the matroid and $c(M)$ the number of \textit{connected components} of the matroid (see \cite{oxley} for undefined terminology).

Also, the following result is a straightforward consequence of the definitions.

\begin{prop}
	If $M_1$ and $M_2$ are matroids with matroid polytopes $\mathscr{P}_1$ and $\mathscr{P}_2$ respectively, then the matroid polytope of the direct sum of matroids $M_1\oplus M_2$ is the product of polytopes $\mathscr{P}_1\times \mathscr{P}_2$. In particular, the Ehrhart polynomial of $M_1\oplus M_2$ is the product of the Ehrhart polynomials of $M_1$ and $M_2$.
\end{prop}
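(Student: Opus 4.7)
The plan is to unwind the definitions and appeal to the standard multiplicativity of the Ehrhart polynomial under Cartesian products. First I would recall that the matroid polytope $\mathscr{P}(M)$ lives in $\mathbb{R}^{E(M)}$ and is the convex hull of the indicator vectors $e_B = \sum_{i\in B} e_i$, where $B$ ranges over the bases of $M$. The ground set of $M_1\oplus M_2$ is the disjoint union $E_1\sqcup E_2$, and by the very definition of direct sum of matroids a subset $B\subseteq E_1\sqcup E_2$ is a basis of $M_1\oplus M_2$ if and only if it can be written as $B = B_1\sqcup B_2$ with $B_i$ a basis of $M_i$.

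Next I would identify $\mathbb{R}^{E_1\sqcup E_2}$ with $\mathbb{R}^{E_1}\times\mathbb{R}^{E_2}$ in the obvious way. Under this identification the indicator vector $e_{B_1\sqcup B_2}$ corresponds exactly to the pair $(e_{B_1}, e_{B_2})$, so the vertex set of $\mathscr{P}(M_1\oplus M_2)$ coincides with the set of pairs $(v_1,v_2)$ where $v_i$ is a vertex of $\mathscr{P}_i$. Taking convex hulls then yields the identity $\mathscr{P}(M_1\oplus M_2) = \mathscr{P}_1\times\mathscr{P}_2$, which is what is meant in this paper by the direct sum $\mathscr{P}_1\oplus\mathscr{P}_2$ of the two polytopes.

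For the Ehrhart statement I would invoke the well-known product formula $L_{P\times Q}(t) = L_P(t)\cdot L_Q(t)$, valid for any two lattice polytopes $P$ and $Q$. This formula follows from the trivial bijection $(tP\times tQ)\cap\mathbb{Z}^{d_1+d_2} = (tP\cap\mathbb{Z}^{d_1})\times(tQ\cap\mathbb{Z}^{d_2})$ for every positive integer $t$, after which the multiplicativity $L_{M_1\oplus M_2}(t) = L_{M_1}(t)\cdot L_{M_2}(t)$ is immediate. The only genuinely geometric step is the identification of the convex hull of the split indicator vectors with the Cartesian product of the two matroid polytopes; everything else is definition-chasing, so I do not anticipate any serious obstacle.
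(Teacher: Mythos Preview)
Your argument is correct and is precisely the ``straightforward consequence of the definitions'' the paper alludes to without spelling out: the paper gives no proof beyond that remark, and your unwinding of the basis description of $M_1\oplus M_2$, the identification $\mathscr{P}(M_1\oplus M_2)=\mathscr{P}_1\times\mathscr{P}_2$, and the multiplicativity of lattice-point counts under Cartesian products is exactly what is intended.
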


Therefore, if one proves that every connected matroid is Ehrhart positive, the first part of Conjecture \ref{deloera} follows immediately.\\

Subsequently, for every pair of integers $n$ and $k$, we denote $\mathscr{C}(k,n)$ the family of all classes of isomorphism of connected matroids of size $n$ and rank $k$.

Notice that for every $M\in \mathscr{C}(k,n)$, the set of bases of $M$, denoted by $\mathscr{B}(M)$, has size at most $\binom{n}{k}$, and equality is attained only when $M$ is isomorphic to the uniform matroid $U_{k,n}$.\\

Hence, the basis polytope $\mathscr{P}(M)$ of a matroid in $\mathscr{C}(k,n)$ is \textit{contained} (as a set) in the hypersimplex $\Delta_{k,n}:=\mathscr{P}(U_{k,n})$. In particular, the number of lattice points in every $t\mathscr{P}(M)$ for $t\geq 0$ is at most the number of lattice points in the dilated hypersimplex $t\Delta_{k,n}$. This gives place to a natural question: is it true that the coefficients of the Ehrhart polynomial of the hypersimplex are always greater or equal than the corresponding coefficients of another matroid of the same rank and cardinality?\\

Turning things around, according to a result established independently by Dinolt \cite{dinolt} and Murty \cite{murty} in the 70's, there is exactly one element in $\mathscr{C}(k,n)$ having the least number of bases. Throughout this article we denote these matroids by $T_{k,n}$ and following \cite{dinolt} we call them \textit{minimal matroids}. \\

If we use the notation $P(t)\preceq Q(t)$ on polynomials $P$ and $Q$ to denote that for every $m$ the coefficient of degree $m$ in $P$ is less or equal than the coefficient of degree $m$ in $Q$, we state our conjecture as follows:
 
\begin{conj}\label{cota}
	Let us denote $i(M,t)$ the Ehrhart polynomial of a matroid $M$. Then if $M$ is a connected matroid of rank $k$ and cardinality $n$, the following inequality holds:
		\[ i(T_{k,n},t) \preceq i(M,t) \preceq i(U_{k,n},t).  \]
\end{conj}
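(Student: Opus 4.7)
The plan is to reduce both inequalities to a coefficient-wise comparison of $h^*$-polynomials, which is cleaner because Stanley's theorem gives nonnegativity for free and because every matroid polytope in $\mathscr{C}(k,n)$ has the same dimension $d=n-1$. From the identity
\[
i(M,t)\;=\;\sum_{i=0}^{d} h^*_i(M)\binom{t+d-i}{d}
\]
and the fact that each binomial on the right already has nonnegative coefficients as a polynomial in $t$, any inequality $h^*(T_{k,n})\preceq h^*(M)\preceq h^*(U_{k,n})$ at the $h^*$ level transfers verbatim to the Ehrhart level. I would therefore work with the $h^*$-polynomials throughout, returning to Ehrhart only at the end.

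For the upper bound, the plan is to upgrade the paper's result that circuit-hyperplane relaxation preserves Ehrhart positivity into a monotonicity statement $i(M,t)\preceq i(M',t)$ whenever $M'$ is obtained from $M$ by relaxing a circuit-hyperplane. A single relaxation adds one basis to $\mathscr{B}(M)$, so a careful reading of the positivity proof should express $i(M',t)-i(M,t)$ as the Ehrhart polynomial of an explicit subpolytope (plausibly a product of two hypersimplices) whose coefficients are nonnegative, yielding single-step monotonicity. To reach $U_{k,n}$ from an arbitrary $M\in\mathscr{C}(k,n)$ one would then chain relaxations; for matroids with no circuit-hyperplanes, one would have to supplement relaxations with further basis-enlarging local moves and verify the same coefficient-wise behavior for each.

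For the lower bound the situation is asymmetric, since $T_{k,n}$ lies at the bottom of the relaxation order and is not reached as the target of any chain of relaxations. I would instead use the explicit formula for $i(T_{k,n},t)$ proved earlier in the paper, expand it as a sum of products of Ehrhart polynomials of hypersimplices, and realize each term as counting a distinguished sub-family of lattice points that embeds into $t\mathscr{P}(M)$ for every connected $M$. A natural first step is to choose a labelling of $M$'s ground set so that $\mathscr{B}(T_{k,n})\subseteq\mathscr{B}(M)$, which gives a genuine sub-polytope inclusion $\mathscr{P}(T_{k,n})\subseteq\mathscr{P}(M)$; a half-open decomposition of the ``excess'' region should then contribute the missing nonnegative Ehrhart polynomial.

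The central obstacle in both directions is that $\preceq$ is coefficient-wise, not pointwise: polytope inclusion only yields $i(T_{k,n},t)\leq i(M,t)$ at positive integers, which is strictly weaker than $\preceq$. Bridging this gap requires a genuinely structural decomposition, either a common lattice subdivision whose pieces have individually positive Ehrhart polynomials, or the $h^*$-reformulation above. A secondary obstacle is the lack of a single partial order on $\mathscr{C}(k,n)$ with $T_{k,n}$ as minimum, $U_{k,n}$ as maximum, and Ehrhart coefficients monotone along its covering relations; constructing such an order, perhaps through a matroid polytope decomposition \emph{à la} Lafforgue, appears to be the essential combinatorial bottleneck.
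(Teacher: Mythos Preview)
The statement is Conjecture~\ref{cota}; the paper does \emph{not} prove it and explicitly leaves it open, so there is no proof to compare against. Read as an attack on an open problem, your plan has three concrete gaps.

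Your reduction to $h^*$-vectors is wrong. You claim each $\binom{t+d-i}{d}$ has nonnegative coefficients in $t$, so that $h^*$-domination would transfer to Ehrhart-coefficient domination. But this fails already for $i=2$: for instance $\binom{t}{2}=\tfrac{1}{2}(t^2-t)$ has a negative linear term, and in general $\binom{t+d-i}{d}$ contains the factor $(t-1)$ whenever $2\leq i\leq d$. Coefficient-wise comparison of $h^*$-vectors therefore does not imply $\preceq$ for Ehrhart polynomials, and the $h^*$ strategy collapses at the outset.

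Your lower-bound step assumes one can relabel so that $\mathscr{B}(T_{k,n})\subseteq\mathscr{B}(M)$, hence $\mathscr{P}(T_{k,n})\subseteq\mathscr{P}(M)$. The paper explicitly rules this out in the introduction: ``it is not true in general that $\mathscr{P}$ contains a copy of the matroid polytope of $T_{k,n}$.'' So the inclusion you need simply fails for some connected $M$, and there is no ``excess region'' to decompose. For the upper bound, the paper does prove that a single relaxation adds $D_{k,n}(t-1)$, which has nonnegative coefficients, so relaxation is monotone for $\preceq$; but many connected matroids have no circuit-hyperplane at all and cannot be relaxed even once, let alone all the way to $U_{k,n}$. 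Your ``further basis-enlarging local moves'' are left unspecified, and specifying such moves together with a proof of coefficient-wise monotonicity for each is precisely the content of the open conjecture.
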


We were able to establish here the Ehrhart positivity (and the $h^*$-real-rootedness) of $T_{k,n}$, and to provide a manifestly positive formula for the coefficients. Hence, the first inequality on Conjecture \ref{cota} implies the first assertion of Conjecture \ref{deloera}.\\

The Ehrhart positivity of minimal matroids is a clue pointing to the truthfulness of De Loera's et al. Conjecture. So far the only known infinite family of Ehrhart positive matroids of all ranks and cardinalities were uniform matroids \cite{ferroni}. This also provides a new example.

The validity of Conjecture \ref{cota} would provide then a severe restriction for the possible polynomials that may occur as the Ehrhart polynomial of a (connected) matroid. 

As we mentioned before, the polytope $\mathscr{P}$ of a connected matroid of rank $k$ and cardinality $n$ is contained in the hypersimplex $\Delta_{k,n}$. In spite of that, it is not true in general that $\mathscr{P}$ contains a copy of the matroid polytope of $T_{k,n}$.

Still, our inequalities have been verified for the all the matroids on the list we gave above.\\

We state our main results as follows:

\begin{teo}
	Let us denote $D_{k,n}(t)$ the Ehrhart polynomial of the matroid $T_{k,n}$. Then:
		\[D_{k,n}(t) = \frac{1}{\binom{n-1}{k-1}} \binom{t+n-k}{n-k} \sum_{j=0}^{k-1}\binom{n-k-1+j}{j}\binom{t+j}{j}\]
	All coefficients of $D_{k,n}(t-1)$ are positive, and hence so are the coefficients of $D_{k,n}(t)$.
\end{teo}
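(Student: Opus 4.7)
The plan is to work directly with an explicit combinatorial model of $T_{k,n}$ and then count lattice points in the dilates of its matroid polytope. First, I would take the description of $T_{k,n}$ as the matroid on $[n]$ whose bases are exactly the $k$-subsets $B$ with $|B\cap[k]|\geq k-1$: basis exchange is a routine check, the count $1+k(n-k)$ matches Dinolt--Murty's minimality, and the matroid is manifestly connected. Because neighbouring vertices of $\Delta_{n,k}$ differ in $|B\cap[k]|$ by at most one, the cutting hyperplane $\sum_{i=1}^k x_i = k-1$ meets $\Delta_{n,k}$ only at existing vertices, so the matroid polytope admits the half-space description
\[
\mathscr{P}(T_{k,n}) = \Delta_{n,k} \cap \Bigl\{ x\in\mathbb{R}^n : x_1+\cdots+x_k\geq k-1 \Bigr\}.
\]

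Next I would enumerate lattice points in $t\mathscr{P}(T_{k,n})$. Setting $u := x_{k+1}+\cdots+x_n \in\{0,1,\ldots,t\}$, the first $k$ coordinates (under the substitution $z_i := t-x_i$) form a composition of $u$ into $k$ nonnegative parts, with the upper bound $z_i\leq t$ automatic since $u\leq t$, giving $\binom{u+k-1}{k-1}$ choices; the remaining $n-k$ coordinates contribute $\binom{u+n-k-1}{n-k-1}$. This yields the partial-sum representation
\[
D_{k,n}(t) = \sum_{u=0}^{t}\binom{u+k-1}{k-1}\binom{u+n-k-1}{n-k-1}.
\]
To transform this into the closed form displayed in the theorem, I would apply Vandermonde in the form $\binom{u+k-1}{k-1}=\sum_{j=0}^{k-1}\binom{k-1}{j}\binom{u}{j}$, swap the order of summation, use the identity $\binom{u}{j}\binom{u+n-k-1}{n-k-1}=\binom{n-k-1+j}{j}\binom{u+n-k-1}{n-k-1+j}$, and sum over $u$ via hockey-stick. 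Equivalently, Euler's transformation applied to $\sum_{u\geq 0}\binom{u+k-1}{k-1}\binom{u+n-k-1}{n-k-1}z^u={}_2F_1(k,n-k;1;z)$ identifies the $h^*$-polynomial of $\mathscr{P}(T_{k,n})$ as $\sum_{j=0}^{k-1}\binom{k-1}{j}\binom{n-k-1}{j}z^j$, from which the Ehrhart polynomial can be read off.

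For the positivity assertion, substituting $t\mapsto t-1$ in the closed form produces
\[
D_{k,n}(t-1) = \frac{1}{\binom{n-1}{k-1}}\binom{t+n-k-1}{n-k}\sum_{j=0}^{k-1}\binom{n-k-1+j}{j}\binom{t+j-1}{j}.
\]
Every factor of the form $\binom{t+m-1}{m}=\tfrac{t(t+1)\cdots(t+m-1)}{m!}$ is a polynomial in $t$ with strictly positive coefficients, so the displayed expression is a nonnegative combination of products of such polynomials and therefore has positive coefficients. The positivity of $D_{k,n}(t)$ then follows by writing $D_{k,n}(t)=D_{k,n}((t+1)-1)$ and observing that substituting $t\mapsto t+1$ sends a polynomial with positive coefficients to another with positive coefficients (each new coefficient being a positive combination of the old ones via binomial coefficients).

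I expect the main obstacle to lie in bridging the partial-sum expression to the specific closed form displayed in the theorem. All the ingredients (Vandermonde, hockey-stick, Euler's transformation) are standard, but the distinctive prefactor $1/\binom{n-1}{k-1}$ and the precise decomposition $\sum_j\binom{n-k-1+j}{j}\binom{t+j}{j}$ suggest a particular sequence of manipulations the author has in mind, and verifying the equivalence of the several natural closed forms carefully is the most calculation-heavy step.
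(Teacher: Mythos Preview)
Your approach is essentially the same as the paper's: you obtain the identical half-space description of $\mathscr{P}(T_{k,n})$ (the paper derives $\sum_{i=k+1}^{n}x_i\le 1$ from the flat lattice rather than by slicing $\Delta_{n,k}$, but the result is the same), reach the same partial-sum expression $D_{k,n}(t)=\sum_{u=0}^{t}\binom{u+k-1}{k-1}\binom{u+n-k-1}{n-k-1}$, and then apply Vandermonde, the identity $\binom{r}{m}\binom{m}{k}=\binom{r}{k}\binom{r-k}{m-k}$, and hockey-stick to reach the closed form, from which the positivity of $D_{k,n}(t-1)$ is read off exactly as you say. The paper does not use Euler's transformation but carries out the equivalent binomial manipulations by hand in two lemmas; your expectation that pinning down the factor $1/\binom{n-1}{k-1}$ is the most laborious step is accurate.
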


\begin{coro}
	The $h^*$-polynomial of $\mathscr{P}(T_{k,n})$ is given by the formula:
	\[h^*(T_{k,n},x) = \sum_{j=0}^{k-1} \binom{k-1}{j}\binom{n-k-1}{j} x^j. \]
	Moreover, it is real-rooted and hence the $h^*$-vector is log-concave and unimodal.
\end{coro}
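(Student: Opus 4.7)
The plan is to first derive the formula for $h^*(T_{k,n},x)$ from the closed form for $D_{k,n}(t)$ given in the preceding theorem, and then to deduce real-rootedness from a classical result on Hadamard products. Since $T_{k,n}$ is connected, $\mathscr{P}(T_{k,n})$ has dimension $d=n-1$, and by definition the $h^*$-polynomial satisfies $\sum_{t\geq 0} D_{k,n}(t)\,x^t = h^*(T_{k,n},x)/(1-x)^n$. To compute the left side in closed form, I would apply the standard binomial identity
$$\binom{t+a}{a}\binom{t+b}{b} \;=\; \sum_{i=0}^{\min(a,b)} \binom{a}{i}\binom{b}{i}\binom{t+a+b-i}{a+b}$$
with $a = n-k$ and $b = j$; together with $\sum_{t\geq 0}\binom{t+c-i}{c}x^t = x^i/(1-x)^{c+1}$, this yields
$$\sum_{t\geq 0}\binom{t+n-k}{n-k}\binom{t+j}{j}x^t \;=\; \frac{1}{(1-x)^{n-k+j+1}} \sum_{i=0}^{\min(n-k,j)}\binom{n-k}{i}\binom{j}{i}x^i.$$
Substituting into the expression for $D_{k,n}$ and multiplying through by $(1-x)^n$ then gives $h^*(T_{k,n},x)$ as an explicit polynomial expression in $x$.

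The corollary thereby reduces to the polynomial identity
$$\binom{n-1}{k-1}^{-1}\sum_{j=0}^{k-1}\binom{n-k-1+j}{j}\Bigl(\sum_{i}\binom{n-k}{i}\binom{j}{i}x^i\Bigr)(1-x)^{k-j-1} \;=\; \sum_{j=0}^{k-1}\binom{k-1}{j}\binom{n-k-1}{j}x^j,$$
which I would prove by extracting the coefficient of $x^m$ on both sides and collapsing the resulting double sum by Chu--Vandermonde. This step is the main obstacle: the identity is elementary, but the bookkeeping is heavy, and a more conceptual argument presumably exists via a half-open lattice decomposition of $\mathscr{P}(T_{k,n})$ whose local contributions sum to $\binom{k-1}{j}\binom{n-k-1}{j}$ in degree $j$.

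Once the formula $h^*(T_{k,n},x) = \sum_{j}\binom{k-1}{j}\binom{n-k-1}{j}x^j$ is in hand, real-rootedness is immediate: this polynomial is precisely the Hadamard (coefficient-wise) product of $(1+x)^{k-1}$ and $(1+x)^{n-k-1}$, both of which have all their zeros at $-1$. By the Malo--Schur theorem, the Hadamard product of two real-rooted polynomials whose zeros are all of the same sign is again real-rooted with zeros of the same sign; hence $h^*(T_{k,n},x)$ has only real non-positive zeros. Since its coefficients are non-negative, real-rootedness forces log-concavity of the coefficient sequence via Newton's inequalities, and log-concavity of a positive sequence implies unimodality.
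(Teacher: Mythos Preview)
Your argument is correct, but for the formula you have chosen the harder of the two available expressions for $D_{k,n}$. The paper actually proves two closed forms: Theorem~\ref{minehrhart} gives
\[
D_{k,n}(t)=\sum_{j=0}^{k-1}\binom{k-1}{j}\binom{n-k-1}{j}\binom{t+n-1-j}{n-1},
\]
and only afterwards derives the factored form $\tfrac{1}{\binom{n-1}{k-1}}\binom{t+n-k}{n-k}\sum_j\binom{n-k-1+j}{j}\binom{t+j}{j}$ that you start from. The paper's own proof of the corollary uses the first formula, from which the $h^*$-polynomial is immediate: since $\sum_{t\ge 0}\binom{t+n-1-j}{n-1}x^t = x^j/(1-x)^n$, multiplying the Ehrhart series by $(1-x)^n$ yields $h^*(T_{k,n},x)=\sum_j\binom{k-1}{j}\binom{n-k-1}{j}x^j$ in one line. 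Your route through Sur\'anyi's identity and the resulting double-sum identity is valid, but the ``main obstacle'' you flag disappears entirely if you begin from the other expression; no coefficient extraction or Chu--Vandermonde bookkeeping is needed.

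For real-rootedness the two approaches are essentially the same classical fact in different wrappings. You invoke the Malo--Schur theorem on the Hadamard product of $(1+x)^{k-1}$ and $(1+x)^{n-k-1}$ directly; the paper instead applies Schur's theorem to obtain that $\sum_j j!\binom{k-1}{j}\binom{n-k-1}{j}x^j$ is real-rooted and then divides out the $j!$ via the P\'olya--Schur multiplier sequence $\{1/j!\}$. Your formulation is arguably cleaner. One minor note: the paper separately verifies log-concavity by the elementary inequality $a_j^2\ge a_{j-1}a_{j+1}$ before proving real-rootedness, whereas you (correctly) deduce log-concavity from real-rootedness via Newton's inequalities.
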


In the course of our proofs, we give a graphical realization of $T_{k,n}$ and an inequality characterization of its matroid polytope.\\

Also, we briefly recall (see Section 5) that if a matroid has a hyperplane that is also a circuit, one may declare this subset to be a basis. This operation, called \textit{relaxation}, changes the matroid by adding just one basis. Thus, if we think of the polytope, this operation consists of adding one more vertex. We prove that this construction is essentially \textit{gluing} the polytope of a minimal matroid, and moreover:

\begin{teo}
	If $M\in\mathscr{C}(k,n)$ is an Ehrhart positive matroid and $\widetilde{M}$ is a circuit-hyperplane relaxation of $M$, then $\widetilde{M}$ is Ehrhart positive too. Furthermore, the following equality holds:
		\[ i(\widetilde{M},t) = i(M,t) + D_{k,n}(t-1)\]
\end{teo}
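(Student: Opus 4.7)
The plan is to realize $\mathscr{P}(\widetilde{M})$ as the union of $\mathscr{P}(M)$ with a suitably placed copy of $\mathscr{P}(T_{k,n})$ glued along a common facet, and then apply inclusion--exclusion for lattice point enumerators. Let $H$ denote the circuit--hyperplane being relaxed; since $H$ is a circuit with $r(H)=k-1$, we have $|H|=k$. Writing $e_H\in\{0,1\}^n$ for its indicator vector, $\widetilde{M}$ differs from $M$ only by the new basis $H$, so
\[
\mathscr{P}(\widetilde{M})\;=\;\operatorname{conv}\bigl(\mathscr{P}(M)\cup\{e_H\}\bigr).
\]

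The first step is to isolate the face of $\mathscr{P}(M)$ that is visible from $e_H$. Every basis $B$ of $M$ satisfies $|B\cap H|\leq r(H)=k-1$, so the inequality $\sum_{i\in H}x_i\leq k-1$ is valid on $\mathscr{P}(M)$, while $e_H$ attains the value $k$. Define
\[
F\;:=\;\mathscr{P}(M)\cap\Bigl\{x:\sum_{i\in H}x_i=k-1\Bigr\}.
\]
I expect its vertices to be exactly the bases with $|B\cap H|=k-1$, namely those of the form $(H\setminus\{e\})\cup\{f\}$ with $e\in H$ and $f\in [n]\setminus H$. Both hypotheses on $H$ are needed here: $H$ being a circuit makes $H\setminus\{e\}$ independent, and $H$ being a hyperplane forces any $f\notin H$ to extend the rank, so $(H\setminus\{e\})\cup\{f\}$ is indeed a basis of $M$. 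This produces $k(n-k)$ vertices, matching exactly the number of non--apex bases of $T_{k,n}$.

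The second step is to identify the ``cap'' $Q:=\operatorname{conv}(F\cup\{e_H\})$ with $\mathscr{P}(T_{k,n})$. Under a bijection sending $H$ to the distinguished ``path'' basis of the graphical realization of $T_{k,n}$ and $[n]\setminus H$ to the parallel class, the vertices of $\mathscr{P}(T_{k,n})$ (one apex together with $k(n-k)$ exchange trees) correspond bijectively to $\{e_H\}\cup\operatorname{vert}(F)$, whence $Q=\mathscr{P}(T_{k,n})$. Since $\mathscr{P}(M)$ and $Q$ lie in opposite closed halfspaces of the hyperplane $\sum_{i\in H}x_i=k-1$ and meet only along $F$, I obtain the decomposition
\[
\mathscr{P}(\widetilde{M})\;=\;\mathscr{P}(M)\cup Q,\qquad \mathscr{P}(M)\cap Q\;=\;F,
\]
and hence $i(\widetilde{M},t)=i(M,t)+D_{k,n}(t)-i(F,t)$ by inclusion--exclusion.

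Finally, I would exploit that $Q$ is a lattice pyramid with base $F$ and apex $e_H$ sitting on the next consecutive lattice hyperplane in the direction of $e_H$. Slicing $tQ$ at $\sum_{i\in H}x_i=t(k-1)+s$ for $s=0,\dots,t$ exhibits each slice as $se_H+(t-s)F$, whose lattice points biject with those of $(t-s)F$, giving the standard pyramid identity
\[
D_{k,n}(t)\;=\;\sum_{s=0}^{t}i(F,s).
\]
Subtracting the analogous expression for $D_{k,n}(t-1)$ collapses the telescope to $i(F,t)=D_{k,n}(t)-D_{k,n}(t-1)$, and the Ehrhart identity becomes $i(\widetilde{M},t)=i(M,t)+D_{k,n}(t-1)$, as required. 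Ehrhart positivity of $\widetilde{M}$ then follows from the assumed positivity of $i(M,t)$ together with that of $D_{k,n}(t-1)$ established in the main theorem. The main obstacle I anticipate is the clean geometric identification of $Q$ with $\mathscr{P}(T_{k,n})$: although the vertex count and combinatorial matching are transparent, excluding any extra vertices of $Q$ and confirming the gluing rigorously will require the inequality description of $\mathscr{P}(T_{k,n})$ developed earlier in the paper.
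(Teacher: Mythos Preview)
Your proposal is correct and follows essentially the same route as the paper: decompose $\mathscr{P}(\widetilde{M})$ as $\mathscr{P}(M)$ glued to a copy of $\mathscr{P}(T_{k,n})$ along the facet $F=\{\sum_{i\in H}x_i=k-1\}$, then apply inclusion--exclusion. The only noteworthy difference is in the last step. The paper computes $i(F,t)$ explicitly as $\binom{n-k-1+t}{n-k-1}\binom{k-1+t}{k-1}$ via balls--and--boxes and then reads off $D_{k,n}(t)-i(F,t)=D_{k,n}(t-1)$ from the partial--sum formula \eqref{sumita}; you instead invoke the lattice--pyramid identity $D_{k,n}(t)=\sum_{s=0}^{t}i(F,s)$ and telescope. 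These are the same observation phrased two ways: your pyramid sum \emph{is} formula \eqref{sumita}, since the slice of $t\mathscr{P}(T_{k,n})$ at $\sum_{i\notin H}x_i=j$ is exactly $(t-j)e_H+jF$. Your concern about ``extra vertices'' of $Q$ is unfounded: you defined $Q=\operatorname{conv}(F\cup\{e_H\})$, so its vertex set is contained in $\operatorname{vert}(F)\cup\{e_H\}$ by construction, and $\operatorname{vert}(F)$ consists precisely of the bases $B$ with $|B\cap H|=k-1$, which you already identified.
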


As a consequence of the preceding theorem we present a way of constructing examples of non-dual and non-isomorphic connected matroids that have the same Ehrhart polynomial and also the same Tutte polynomial, and whose basis polytopes are not combinatorially equivalent.

\section{The matroid polytope of minimal matroids}

We start this section by recalling a result established independently in \cite{dinolt} and \cite{murty}.

\begin{teo}\label{dinoltmurty}
	If $M\in \mathscr{C}(k,n)$, then $|\mathscr{B}(M)|\geq k(n-k)+1$. Furthermore there is a unique (up to isomorphism) connected matroid of size $n$ and rank $k$ for which equality is attained.
\end{teo}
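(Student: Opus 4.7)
The plan is to prove the inequality and the uniqueness statement together by induction on $n$. The base cases $k=1$ and $k=n-1$ are handled directly: a rank-one connected matroid must have every element in a single parallel class and is therefore isomorphic to $U_{1,n}$, which has $n=1\cdot(n-1)+1$ bases; the case $k=n-1$ follows by matroid duality together with the symmetry of $k(n-k)+1$ in $k$ and $n-k$. For the inductive step, I would pick any $e\in E(M)$; since $M$ is connected with $n\geq 2$, the element $e$ is neither a loop nor a coloop, so $M\setminus e$ has rank $k$ on $n-1$ elements, $M/e$ has rank $k-1$ on $n-1$ elements, and the standard recursion
\[|\mathscr{B}(M)| = |\mathscr{B}(M\setminus e)| + |\mathscr{B}(M/e)|\]
drives the induction.

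The key structural tool is Tutte's theorem, which guarantees that at least one of $M\setminus e$ and $M/e$ is connected. If \emph{both} are connected, applying the inductive hypothesis gives
\[|\mathscr{B}(M)|\geq \bigl(k(n-k-1)+1\bigr)+\bigl((k-1)(n-k)+1\bigr)=k(n-k)+1+(k-1)(n-k-1),\]
which is strictly larger than $k(n-k)+1$ whenever $k\geq 2$ and $n-k\geq 2$, so outside the base cases equality forces exactly one of the two minors to be disconnected. If only $M\setminus e$ is connected, then $M/e=N_1\oplus\cdots\oplus N_\ell$ with each $N_i$ connected; I would apply induction to each $N_i$, use $|\mathscr{B}(M/e)|=\prod_i|\mathscr{B}(N_i)|$, and combine this with the inductive estimate on $M\setminus e$ by exploiting rank and size constraints on the $N_i$ that are forced by the connectivity of $M$ across $e$.

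For the uniqueness I would exhibit $T_{k,n}$ explicitly as the rank-$k$ matroid on $\{1,\ldots,n\}$ whose bases are a fixed $k$-set $B_0$ together with all single swaps $(B_0\setminus\{i\})\cup\{j\}$ for $i\in B_0$ and $j\notin B_0$, verify that it is connected and has exactly $k(n-k)+1$ bases, and then run the induction in reverse. Equality in the bound forces the two-connected-minors case to collapse outside the base range, so an extremal $M$ must admit an element whose deletion or contraction decomposes into components whose parameters saturate the inductive inequalities; tracing these constraints back through the recursion pins down the isomorphism type.

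The hardest step will be the disconnected-minor case together with the rigidity argument for uniqueness. Bounding the product $\prod_i|\mathscr{B}(N_i)|$ from below without slack requires careful use of connectivity to restrict which rank-size profiles $(r_i,n_i)$ can occur among the components, and extracting enough information from equality to reconstruct $T_{k,n}$ up to isomorphism is delicate: one essentially needs to show that the extremal components must glue back together, across the deletion or contraction of $e$, in the unique way that reproduces the single-swap structure of $T_{k,n}$.
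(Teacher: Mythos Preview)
The paper does not actually prove this theorem: it is stated as a known result with citations to Dinolt and Murty, and no proof is given in the text. So there is no ``paper's own proof'' to compare your attempt against.

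On the substance of your sketch: the overall inductive strategy via the basis recursion $|\mathscr{B}(M)|=|\mathscr{B}(M\setminus e)|+|\mathscr{B}(M/e)|$ together with the fact that at least one of $M\setminus e$, $M/e$ is connected is the standard route to results of this type, and your treatment of the base cases and the both-minors-connected case is correct. The place where the argument is genuinely incomplete is exactly the one you flag. When, say, $M\setminus e$ is connected and $M/e=N_1\oplus\cdots\oplus N_\ell$, you need $\prod_i |\mathscr{B}(N_i)|\geq k$; to get this you should first observe that $M/e$ has no coloops (a coloop of $M/e$ would be a coloop of $M$), so every component is either a single loop or satisfies $1\leq r_i<n_i$, and then bound the product using $\sum r_i=k-1$. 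This does go through, but it is not automatic, and the equality analysis --- showing that saturation forces a single non-loop component with parameters $(k-1,k)$ together with the right number of loops (elements parallel to $e$), which then rebuilds $T_{k,n}$ --- requires care. Your plan is sound, but what you have written is a proof outline rather than a proof; filling in the disconnected-minor bound and the rigidity step is where the real work lies.
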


We proceed to a realization of these minimal matroids. They happen to be indeed graphical matroids.

\begin{prop}\label{grafo}
	Let $T_{k,n}$ be the graph given by a cycle of length $k+1$ where one edge is replaced with $n-k$ parallel copies. Then the cycle matroid of $T_{k,n}$ is connected, has cardinality $n$, rank $k$ and exactly $k(n-k)+1$ bases.
\end{prop}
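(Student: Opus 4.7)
The plan is to unpack the definition of $T_{k,n}$ and verify the four claims by explicit edge bookkeeping. Label the vertices of the underlying $(k+1)$-cycle as $v_0,v_1,\ldots,v_k$, write $e_i$ for the edge $v_{i-1}v_i$ with $i=1,\ldots,k$, and let $f_1,\ldots,f_{n-k}$ be the parallel copies of the remaining cycle edge $v_kv_0$. The total edge count is then $k+(n-k)=n$, giving cardinality $n$, and since the graph is connected on $k+1$ vertices, the rank of its cycle matroid is $(k+1)-1=k$.

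For matroid connectedness I would invoke the standard fact that the cycle matroid of a loopless graph is connected if and only if the graph is $2$-vertex-connected. A quick case check shows $T_{k,n}$ has no cut-vertex: deleting $v_0$ or $v_k$ leaves the path $v_1-\cdots-v_k$ or $v_0-\cdots-v_{k-1}$ respectively, and deleting any interior $v_i$ with $1\leq i\leq k-1$ leaves the two sub-paths $v_0-\cdots-v_{i-1}$ and $v_{i+1}-\cdots-v_k$, which are reconnected through any of the parallel edges $f_j$. Hence the cycle matroid $M(T_{k,n})$ is connected.

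To count the bases of $M(T_{k,n})$, i.e.\ the spanning trees of $T_{k,n}$, I would stratify by the number of parallel edges used. Since any two $f_j$'s already form a cycle, a spanning tree contains at most one of them. If it uses none, then all of $e_1,\ldots,e_k$ are forced, producing the single spanning tree $v_0-v_1-\cdots-v_k$. If it uses exactly one $f_j$, then precisely one $e_m$ must be omitted to avoid creating the whole $(k+1)$-cycle together with $f_j$; any such choice does produce a spanning tree, since the two resulting sub-paths of $v_0-\cdots-v_k$ are glued together via $f_j$. This stratum contributes $k(n-k)$ trees, and summing yields $k(n-k)+1$ bases, as claimed.

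The only non-trivial step is the passage from $2$-vertex-connectedness of the graph to connectedness of its cycle matroid, which is standard matroid-theoretic folklore; everything else is direct enumeration on a very transparent graph, so no real obstacle arises.
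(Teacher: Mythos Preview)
Your proof is correct and follows essentially the same route as the paper: both verify cardinality and rank directly, invoke the equivalence between $2$-vertex-connectedness of the graph and connectedness of its cycle matroid, and count spanning trees by splitting into the case of zero versus one parallel (``red'') edge. Your version simply makes the $2$-connectedness check and the edge labeling more explicit than the paper does.
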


\begin{proof}
	We will use the name \textit{red edges} when we refer to the $n-k$ parallel edges as in the statement. The remaining edges will be called \textit{black edges}. 
	\begin{figure}[H]\label{figura}
		\begin{tikzpicture}  
		[scale=2.5,auto=center,every node/.style={circle,fill=blue!10}] 
		
		\node (a1) at (-0.5,0.86) {};  
		\node (a2) at (-1,0)  {}; 
		\node (a3) at (-0.5,-0.86)  {};  
		\node (a4) at (0.5,-0.86) {};  
		\node (a5) at (1,0)  {};  
		\node (a6) at (0.5,0.86)  {};    
		
		\draw (a1) -- (a2); 
		\draw (a2) -- (a3);  
		\draw (a3) -- (a4);  
		\draw (a4) -- (a5);  
		\draw (a5) -- (a6);  
		\draw (a6) edge[color=red] (a1);
		\draw (a6) edge[bend right=20,color=red] (a1);
		\draw (a6) edge[bend right=-20,color=red] (a1);
		\end{tikzpicture} \caption{$T_{5,8}$}
	\end{figure}
	Observe that we cycle matroid of $T_{k,n}$ does indeed trivially satisfy the cardinality and rank conditions: we have $n$ elements in total and the maximal independent sets are of cardinality $k$. It is also straightforward to verify that this graph is biconnected and hence its cycle matroid is connected. 
	
	Finally, since a basis of the cycle matroid corresponds to a spanning tree on the graph, we notice that we have two kind of spanning trees: those that contain just one red edge, and those that contain none. In the first case, we can choose one among the $n-k$ red edges, and leave out one among of the $k$ black edges. In the second case, no red edges implies that the spanning tree must consist of all black edges. Thus, $(n-k)k+1$ is the total number of spanning trees.
\end{proof}

\begin{obs}
	It is clear from the minimality property of these matroids that the dual of the minimal matroid $T_{k,n}$ is isomorphic to $T_{n-k,n}$. 
\end{obs}

Let us prove now that $T_{k,n}$ is a Lattice Path Matroid \cite{bonin} with a nice structure. Using \cite{knauer2,knauer1} and all the terminology defined within that article, we can see that $T_{k,n}$ coincides with the \textit{snake} $S(n-k,k)$. This result is not used in the sequel but we include it here for the sake of completeness.

\begin{prop}
    The minimal matroid $T_{k,n}$ is isomorphic to the snake matroid $S(n-k,k)$.
\end{prop}

\begin{proof}
    Recall from \cite{knauer2, knauer1} that the snake matroid $S(n-k,k)$ is defined as the lattice path matroid given by upper path $U = (1,n-k+1, n-k+2, \ldots,n-1)$ and lower path $L = (n-k+1, n-k+2,\ldots, n)$.
    Following \cite{knauer2}, one has that the rank of this matroid is exactly $k$ and the cardinality is exactly $n$. It is a connected matroid by \cite[Theorem 3.6]{bonin}. If we prove that the number of bases of this matroid is $k(n-k)+1$, then by the uniqueness established in Theorem \ref{dinoltmurty} the proof would be complete. To this end, notice that the number of bases of such a matroid is equal to the number of paths $P$ that do not go above the path:
        \[ \operatorname{st}(U) = (1, \underbrace{0, \ldots, 0}_{n-k-1}, \underbrace{1, \ldots, 1}_{k-1}, 0).\]
    and do not go below the path:
        \[ \operatorname{st}(L) = (\underbrace{0,\ldots, 0}_{n-k},\underbrace{1,\ldots,1}_{k}).\] 
    \begin{itemize}
        \item If $\operatorname{st}(P)$ has all zeros among the first $n-k$ entries, that means that $P=L$.
        \item If $\operatorname{st}(P)$ has exactly one occurrence of a one among the first $n-k$ entries (it cannot have more occurrences, since otherwise it would be above $U$), we have $n-k$ possibilities of where to put that one. Among the remaining entries there has to be exactly one zero, which gives us $k$ possibilities of where to put that. The total count is then $k(n-k)$ for this case.
    \end{itemize}
    So we have exactly $k(n-k)+1$ bases, as claimed.
\end{proof}

Recall that a \textit{flat} $F$ of a matroid $M$ is a subset such that $\rk(F\cup\{e\})>\rk(F)$ for all $e\notin F$. 

We can use the family $\mathscr{F}(M)$ of all the flats of a matroid $M$ to give an inequality description for the basis polytope of $M$. 

\begin{prop}\label{desig}
	Let $M$ be a matroid on the set $\{1,\ldots,n\}$. Then $\mathscr{P}(M)$ is given by:
	\[\mathscr{P}(M) = \left\{ x\in \mathbb{R}^n_{\geq 0} : \sum_{i=1}^n x_i = \rk(M) \text{ and } \sum_{i\in F} x_i \leq \rk(F)  \text{ for all } F\in\mathscr{F}(M)\right\}. \]
\end{prop}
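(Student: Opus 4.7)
The plan is to reduce the statement to the classical Edmonds description of the matroid polytope, namely
\[
\mathscr{P}(M) = \Bigl\{ x \in \mathbb{R}^n_{\geq 0} : \sum_{i=1}^n x_i = k,\ \sum_{i\in A} x_i \leq \rk(A) \text{ for every } A\subseteq\{1,\ldots,n\}\Bigr\},
\]
and then to show that among these rank inequalities only the ones indexed by flats are actually needed. Granting Edmonds' theorem (which is standard and can be proven via the greedy algorithm and LP duality, or by deletion-contraction induction), the remaining work is a short closure argument.

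For the inclusion $\mathscr{P}(M)\subseteq\{\ldots\}$ I would just check the defining conditions on each vertex $e_B$ with $B\in\mathscr{B}(M)$ and extend by convexity: nonnegativity and $\sum_i x_i=k$ are immediate, while for every flat $F$ one has $\sum_{i\in F}(e_B)_i = |B\cap F|\leq \rk(F)$ because $B\cap F$ is independent in the restriction $M|_F$.

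For the reverse inclusion, suppose $x$ lies in the polytope on the right-hand side; I then verify the full Edmonds system and invoke his theorem. Given an arbitrary $A\subseteq\{1,\ldots,n\}$, let $\overline{A}$ denote its closure, i.e.\ the smallest flat containing $A$. A defining property of the matroid closure is $\rk(\overline{A})=\rk(A)$, and $A\subseteq\overline{A}$ combined with $x\geq 0$ yields
\[
\sum_{i\in A} x_i \;\leq\; \sum_{i\in\overline{A}} x_i \;\leq\; \rk(\overline{A}) \;=\; \rk(A),
\]
the middle inequality being the hypothesis applied to the flat $\overline{A}$. Hence $x$ satisfies every rank inequality, so by Edmonds' theorem $x\in\mathscr{P}(M)$.

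The only genuine obstacle is Edmonds' theorem itself; once it is in hand, the passage from arbitrary subsets to flats is a transparent monotonicity argument resting on the fact that taking closure preserves rank while enlarging the underlying set, so that the nonnegativity of the coordinates makes the associated linear functional only larger on the closure.
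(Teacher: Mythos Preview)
Your argument is correct and is the standard way to deduce the flat description from Edmonds' theorem: the closure step $\sum_{i\in A}x_i\leq\sum_{i\in\overline{A}}x_i\leq\rk(\overline{A})=\rk(A)$ is exactly what is needed, and the forward inclusion via vertices is fine. There is nothing to compare against, since the paper does not prove this proposition at all but simply cites \cite[Proposition~2.3]{feichtner}; your sketch is thus more detailed than what the paper provides.
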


\begin{proof}
	See for example \cite[Proposition 2.3]{feichtner}.
\end{proof}

In all what follows we will use the name $T_{k,n}$ for the cycle matroid of the graph $T_{k,n}$. This abuse of notation should not cause confusions.

Let us characterize all flats of the matroid $T_{k,n}$. Using the notation of the proof of Proposition \ref{grafo}, we see that there are two types of flats in $T_{k,n}$: those that contain a red edge (and hence all of them), and those that consist of only black edges.

We label all black edges with the numbers $\{1,2,\ldots,k\}$ and the red ones with the numbers $\{k+1,\ldots,n\}$.\\

\begin{itemize}
	\item Those flats that contain all red edges, may contain any number $m\neq k-1$ of black edges. It cannot contain exactly $k-1$, since adding the remaining edge will not increase the rank, thus contradicting the definition of flat. Hence there are $2^k - k$ such flats.
	\item Those flats that do not contain red edges may contain any proper subset of black edges. Hence there are $2^k-1$ such flats.\\
\end{itemize}

Using Proposition \ref{desig} we can formulate now a characterization of $\mathscr{P}(T_{k,n})$ using $2^{k+1}-k-1$ inequalities. However, many of these inequalities are superfluous.

\begin{prop}\label{caract}
	The polytope $\mathscr{P}(T_{k,n})$ is characterized by:
	\[ \mathscr{P}(T_{k,n}) = \left\{ x\in [0,1]^n : \sum_{i=1}^n x_i = k \text{ and }\sum_{i=k+1}^{n} x_i\leq 1 \right\}.\]
\end{prop}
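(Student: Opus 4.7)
The plan is to invoke Proposition~\ref{desig} and check that among the $2^{k+1}-k-1$ flat inequalities defining $\mathscr{P}(T_{k,n})$, all but those displayed in the statement are redundant. From the enumeration just carried out, the non-trivial flat inequalities split into two families: for each proper subset $B\subsetneq\{1,\dots,k\}$ of black edges (an independent set, so a flat of rank $|B|$) one has $\sum_{i\in B}x_i\le |B|$; and for each $B\subseteq\{1,\dots,k\}$ with $|B|\neq k-1$ one has $\sum_{i\in B}x_i+\sum_{i=k+1}^{n}x_i\le |B|+1$, coming from the flat $B\cup\{k+1,\dots,n\}$ of rank $|B|+1$.

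For the inclusion $\supseteq$, I take an arbitrary $x$ satisfying the constraints on the right-hand side of the statement and verify both families of flat inequalities. The first type reduces to summing the bounds $x_i\le 1$ over $i\in B$. The second follows by adding this same bound to the hypothesis $\sum_{i=k+1}^{n}x_i\le 1$. Neither step uses anything beyond the displayed constraints.

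For the reverse inclusion, I start from $x\in\mathscr{P}(T_{k,n})$ and recover the inequalities $x_i\le 1$ and $\sum_{i=k+1}^{n}x_i\le 1$ as particular cases of Proposition~\ref{desig}. For a black index $i\in\{1,\dots,k\}$, the singleton $\{i\}$ is a rank-$1$ flat (a black edge has no parallel in the graph), yielding $x_i\le 1$ directly. The full set $\{k+1,\dots,n\}$ of red edges is itself a rank-$1$ flat (all red edges are pairwise parallel), yielding $\sum_{i=k+1}^{n}x_i\le 1$. The bound $x_j\le 1$ for a red index $j$ is then a consequence of this inequality together with the non-negativity of the remaining red coordinates.

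The argument is essentially bookkeeping rather than computation, and does not present a substantial technical obstacle, since the classification of flats has already been handled in the paragraphs preceding the statement. The only point requiring a moment of attention is keeping track of which subsets of edges actually form flats and which do not, so that the verification of Type~II inequalities only invokes permissible values of $|B|$; this is immediate from the graphical picture.
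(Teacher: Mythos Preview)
Your proposal is correct and follows essentially the same approach as the paper: invoke Proposition~\ref{desig}, split the flat inequalities into the two families (black-only flats and flats containing all red edges), and observe that both families are redundant given the box constraints and the single inequality $\sum_{i=k+1}^n x_i\le 1$. Your framing as a two-sided inclusion, with the explicit recovery of $x_i\le 1$ from the singleton flats for black $i$ and from the red rank-$1$ flat for red $i$, is slightly more detailed than the paper's presentation but logically identical.
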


\begin{proof}
	Recall that flats consisting only on black edges are exactly proper subsets $F\subsetneq \{1,\ldots,k\}$, and hence having $\rk(F)=|F|$. According to Proposition \ref{grafo}, any of these flats gives an inequality of the form:
		\[ \sum_{i\in F} x_i \leq |F|.\]
	However these inequalities $2^{k}-1$ inequalities are implied trivially by those of the form:
		\[ x_i \leq 1 \text{ for all } i=1,\ldots,k.\]	
	
	Similarly, the flats containing all red edges are of the form $F'\cup \{k+1,\ldots,n\}$ where $F'$ is a subset of $\{1,\ldots,k\}$ with $|F'|\neq k-1$. Inequalities in this case are of the form:
		\begin{equation}
			\sum_{i=k+1}^n x_i + \sum_{i\in F'} x_i \leq |F'|+1. \label{flatsineq}
		\end{equation} 
	In particular, taking $F'=\varnothing$, one has:
		\[ \sum_{i=k+1}^n x_i \leq 1,\]
	and all of \eqref{flatsineq} are implied by the previous inequalities $x_i\leq 1$ for $i=1,\ldots,k$.
\end{proof}

\begin{obs}
	Since the matroid polytope is an intersection of several halfspaces that are in bijective correspondence with the flats of $M$, one may distinguish those flats whose removal changes the polytope and call them \textit{flacets}, (see for instance \cite{feichtner}). Hence, the preceding result gives a characterization of the flacets of $\mathscr{P}(T_{k,n})$.
\end{obs}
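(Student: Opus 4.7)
The plan is a short bookkeeping argument layered on top of the proof of Proposition \ref{caract}. By Proposition \ref{desig} the matroid polytope $\mathscr{P}(T_{k,n})$ has an inequality description whose non-trivial halfspaces are indexed by the flats of $T_{k,n}$, and a flat $F$ is a flacet precisely when the associated inequality $\sum_{i\in F} x_i \leq \rk(F)$ cannot be dropped without enlarging $\mathscr{P}(T_{k,n})$. To prove the remark it therefore suffices to identify, from the proof of Proposition \ref{caract}, which flat-inequalities actually survive the reductions performed there, and then to check that these survivors are mutually non-redundant.

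First I would match each surviving inequality in Proposition \ref{caract} to its flat. Each $x_i \leq 1$ with $i\in\{1,\ldots,k\}$ comes from the rank-one flat $\{i\}$: a single black edge is closed in the cycle matroid of $T_{k,n}$, since adjoining any other edge strictly increases the rank. The inequality $\sum_{i=k+1}^n x_i \leq 1$ comes from the parallel class $\{k+1,\ldots,n\}$ of red edges, also a rank-one flat, whose closure equals itself because adjoining any black edge increases the rank. The proof of Proposition \ref{caract} already rewrites every other flat-inequality as an explicit sum of inequalities of these two types, so all other flats fail to be flacets.

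Next I would verify that the surviving inequalities are mutually non-redundant, which requires, for each of them, a point satisfying all remaining constraints and violating only the one dropped. For each black $i$, the point $k\,e_i$ satisfies $x\geq 0$, $\sum_j x_j = k$, $x_j=0\leq 1$ for $j\neq i$, and $\sum_{j=k+1}^n x_j = 0\leq 1$, while $x_i = k > 1$ as soon as $k\geq 2$; hence $x_i\leq 1$ is not implied by the other surviving inequalities. Symmetrically, the point $k\,e_{k+1}$ satisfies $x\geq 0$, $\sum_j x_j = k$, and $x_j = 0\leq 1$ for every black $j$, while $\sum_{j=k+1}^n x_j = k > 1$; hence $\sum_{j=k+1}^n x_j \leq 1$ is also not implied.

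The argument has no real obstacle; the one mild subtlety is the degenerate range $k=1$ (equivalently $T_{1,n}=U_{1,n}$), where all edges are parallel and the candidate flats $\{1\}$ and $\{2,\ldots,n\}$ both fail to be flats, so that Proposition \ref{caract} degenerates to the standard simplex description and yields no non-trivial flacets, consistent with the remark. Combining the two steps, the flacets of $\mathscr{P}(T_{k,n})$ are exactly $\{1\},\ldots,\{k\}$ together with the parallel class $\{k+1,\ldots,n\}$, which is the content of Proposition \ref{caract}.
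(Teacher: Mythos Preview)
The paper does not prove this remark; it is offered as a one-line observation that the reduction carried out in Proposition~\ref{caract} already singles out the flacets, with a pointer to \cite{feichtner} for the terminology. Your proposal supplies the verification that the paper omits, and the argument is correct: you correctly match the surviving inequalities $x_i\le 1$ for $i\le k$ and $\sum_{i>k}x_i\le 1$ to the rank-one flats $\{1\},\ldots,\{k\}$ and the parallel class $\{k+1,\ldots,n\}$, observe that Proposition~\ref{caract} already shows every other flat inequality is implied by these, and exhibit witness points $k\,e_i$ and $k\,e_{k+1}$ to prove each surviving inequality is irredundant.

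One small point worth tightening: your witnesses establish irredundancy inside the \emph{reduced} system of Proposition~\ref{caract}, whereas a flacet is by definition a flat whose inequality is irredundant in the \emph{full} flat-indexed system of Proposition~\ref{desig}. These agree here because, for a polytope that is full-dimensional in its affine hull, an inequality in any valid description is irredundant if and only if it is facet-defining; combined with the fact that distinct proper flats of a loopless, coloopless matroid yield inequivalent supporting hyperplanes, irredundancy in one description forces irredundancy in the other. A sentence to this effect would close the only gap. Your treatment of the degenerate case $k=1$ (where $T_{1,n}\cong U_{1,n}$ and neither $\{1\}$ nor $\{2,\ldots,n\}$ is closed) is also correct.
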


\begin{obs}
	As pointed out before, since $T_{k,n}$ is isomorphic to the snake $S(k,n-k)$, it is possible to find another characterization of the polytope $\mathscr{P}(T_{k,n})$ using the result \cite[Theorem 4.7]{knauer2} stating that matroid polytopes of snakes coincide with the order polytopes of zig-zag posets. In this case, $\mathscr{P}(T_{k,n})$ coincides with the order polytope of the \textit{zig-zag poset} $Z(k,n-k)$ (see \cite{knauer2} for the definition of this poset).
\end{obs}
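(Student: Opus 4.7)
The plan is two-fold. The most direct approach is to invoke the main result of \cite{knauer2}, which asserts that the matroid polytope of every snake coincides with the order polytope of its associated zig-zag poset. Combined with the identification of $T_{k,n}$ with the snake $S(k,n-k)$ recorded earlier in this section (via \cite[Theorem 3.2]{knauer1}), the Remark becomes a pure specialization, and one only needs to verify that the poset extracted for a two-part snake is what the reference calls $Z(k,n-k)$.

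A self-contained verification would exhibit the isomorphism directly, starting from Proposition \ref{caract}. I would take $Z(k,n-k)$ to be the $(n-1)$-element poset obtained from two saturated chains $p_1<\cdots<p_k$ and $q_1<\cdots<q_{n-k}$ by identifying their tops $p_k=q_{n-k}$; its order polytope is naturally parametrized by coordinates $t_1,\ldots,t_k,s_1,\ldots,s_{n-k-1}$ subject to the chain relations and $t_k\leq 1$. Introducing the affine map
\[
\phi:\mathbb{R}^{n-1}\to\mathbb{R}^n,\qquad x_i:=1-(t_i-t_{i-1})\ \text{for } i\leq k,\qquad x_{k+j}:=s_j-s_{j-1}\ \text{for } 1\leq j\leq n-k,
\]
with $t_0:=s_0:=0$ and $s_{n-k}:=t_k$, a routine computation shows that $\phi$ has image inside the hyperplane $\{\sum_i x_i=k\}$, satisfies $\sum_{i>k}x_i=t_k$, converts each chain inequality into a box constraint $x_\alpha\in[0,1]$, and is injective with inverse given by partial sums of the $x$'s. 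By Proposition \ref{caract}, the image is precisely $\mathscr{P}(T_{k,n})$, so $\phi$ is an affine bijection from the order polytope of $Z(k,n-k)$ onto $\mathscr{P}(T_{k,n})$.

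The only genuine obstacle I foresee is bookkeeping: the definitions of the snake $S(a_1,\ldots,a_\ell)$ and of its associated zig-zag poset $Z(a_1,\ldots,a_\ell)$ in \cite{knauer1,knauer2} fix specific orientations, so one must reconcile those conventions with the two-chain poset written above before concluding that both uses of the label $Z(k,n-k)$ refer to the same object. Once this is settled, the remark is immediate.
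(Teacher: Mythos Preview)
The paper gives no proof of this Remark; it is stated purely as a citation of the snake identification (via \cite{knauer1}) together with the main theorem of \cite{knauer2}. Your first paragraph is therefore exactly the paper's own treatment, and nothing more is required.

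Your second paragraph goes further and supplies an explicit affine isomorphism. The computation checks out: with your poset (two chains of lengths $k$ and $n-k$ glued at their maxima) the order polytope has coordinates $t_1\le\cdots\le t_k\le 1$ and $s_1\le\cdots\le s_{n-k-1}\le t_k$, and your map $\phi$ sends this bijectively onto the region described in Proposition~\ref{caract}, with inverse given by $t_i=i-\sum_{\ell\le i}x_\ell$ and $s_j=\sum_{\ell\le j}x_{k+\ell}$. Since both $\phi$ and its inverse are integral affine, the lattice structures match, which is strictly more information than the Remark asserts. The caveat you flag is real: the label $Z(k,n-k)$ in \cite{knauer2} refers to a specific fence, and one must confirm that a two-segment zig-zag there is precisely the $\Lambda$-shaped poset you wrote down (it is, up to the evident duality, but the paper relies on the reference rather than spelling this out).
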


\begin{obs}
    As we mentioned in the introduction, it is not true that the polytope of every connected matroid of rank $k$ and cardinality $n$ contains a copy of $\mathscr{P}(T_{k,n})$. For example, let $M$ be the cycle matroid of the following graph:
    
    \begin{figure}[H]\label{figura2}
		\begin{tikzpicture}  
		[scale=2.5,auto=center,every node/.style={circle,fill=blue!10}] 
		
		\node (a2) at (-0.5,0.86) {};  
		\node (a3) at (0,1.73)  {}; 
		\node (a1) at (0.5,0.86)  {};    
		
		\draw (a1) -- (a2); 
		\draw (a3) edge[bend right=20,color=black] (a1) ;
		\draw (a3) edge[bend right=-20,color=black] (a1);
		\draw (a3) edge[bend right=20,color=black] (a2);
		\draw (a3) edge[bend right=-20,color=black] (a2);
		\end{tikzpicture} 
	\end{figure}
    This matroid $M$ has $8$ bases, given that the graph has $8$ spanning trees. It is connected, has rank $2$ and cardinality $5$. Also, $T_{2,5}$ has exactly $7$ bases. There is no way we can delete one basis from the set $\mathscr{B}(M)$ and obtain the set of bases of a matroid isomorphic to $T_{2,5}$. At the level of polytopes, this means that no subset of $7$ vertices of $\mathscr{P}(M)$ induces a polytope that is a copy of $\mathscr{P}(T_{k,n})$.
\end{obs}

\section{The Ehrhart Polynomial of $T_{k,n}$}

In this section we give a formula for the Ehrhart polynomial of $\mathscr{P}(T_{k,n})$. Our proofs are elementary and consist in several manipulations of combinatorial identities. In the Appendix we include the proofs of some results that are used throughout our computations. We remark that alternative proofs are possible using the language of generalized hypergeometric functions and hypergeometric transformations \cite{knuth}.\\

We start with our first formula for $D_{k,n}$. An equivalent version of this formula was found in \cite[Theorem 3.8]{knauer2}.

\begin{teo}\label{minehrhart}
	Let $D_{k,n}(t)\in\mathbb{Q}[t]$ be the Ehrhart polynomial of the matroid $T_{k,n}$. Then the following equality holds.
		\[D_{k,n}(t) = \sum_{j=0}^{k-1} \binom{k-1}{j}\binom{n-k-1}{j}\binom{t+n-1-j}{n-1}.\]
\end{teo}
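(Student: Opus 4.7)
The plan is to reduce the Ehrhart polynomial of $T_{k,n}$ to a one-parameter sum coming directly from the inequality description of $\mathscr{P}(T_{k,n})$ in Proposition \ref{caract}, and then to rewrite that sum in the claimed form via a hypergeometric-type binomial identity together with the hockey-stick identity.

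First, I would translate Proposition \ref{caract} into a lattice-point count: $D_{k,n}(t)$ is the number of $x\in\mathbb{Z}^n$ with $0\le x_i\le t$, $\sum_{i=1}^n x_i=tk$, and $\sum_{i=k+1}^n x_i \le t$. The substitution $w_i=t-x_i$ for $i=1,\ldots,k$ and $z_j=x_{k+j}$ for $j=1,\ldots,n-k$ is a bijection onto pairs $(w,z)\in\mathbb{Z}_{\ge 0}^k\times\mathbb{Z}_{\ge 0}^{n-k}$ with $\sum w_i=\sum z_j\le t$; the coordinatewise bounds $w_i,z_j\le t$ then become automatic. Stratifying by the common value $s=\sum w_i=\sum z_j$ and using the standard count of weak compositions gives
\[
    D_{k,n}(t) \;=\; \sum_{s=0}^{t}\binom{s+k-1}{k-1}\binom{s+n-k-1}{n-k-1}.
\]

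Second, the heart of the proof is the identity
\[
    \binom{s+k-1}{k-1}\binom{s+n-k-1}{n-k-1} \;=\; \sum_{j=0}^{k-1}\binom{k-1}{j}\binom{n-k-1}{j}\binom{s+n-2-j}{n-2},
\]
which, after extracting the coefficient of $x^s$, is equivalent to the generating-function identity
\[
    \sum_{s\ge 0}\binom{s+k-1}{k-1}\binom{s+n-k-1}{n-k-1}\,x^s \;=\; \frac{1}{(1-x)^{n-1}}\sum_{j=0}^{k-1}\binom{k-1}{j}\binom{n-k-1}{j}\,x^j.
\]
The left-hand series is the Gauss hypergeometric function ${}_2F_1(k,n-k;1;x)$, and the identity is Euler's transformation ${}_2F_1(\alpha,\beta;\gamma;x)=(1-x)^{\gamma-\alpha-\beta}\,{}_2F_1(\gamma-\alpha,\gamma-\beta;\gamma;x)$ specialised to $\gamma=1$, together with the Pochhammer evaluations $(1-k)_j/j!=(-1)^j\binom{k-1}{j}$ and $(1-n+k)_j/j!=(-1)^j\binom{n-k-1}{j}$. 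An alternative, wholly elementary route is to expand $\binom{s+n-k-1}{n-k-1}$ by Chu--Vandermonde and then rearrange using standard binomial identities, in the style of the elementary manipulations the paper's Appendix seems designed to collect.

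Finally, substituting the key identity, interchanging the order of summation, and applying hockey stick in the form $\sum_{s=0}^{t}\binom{s+n-2-j}{n-2}=\binom{t+n-1-j}{n-1}$ (the terms with $s<j$ vanish), I arrive at
\[
    D_{k,n}(t) \;=\; \sum_{j=0}^{k-1}\binom{k-1}{j}\binom{n-k-1}{j}\binom{t+n-1-j}{n-1},
\]
which is the claimed formula. The main obstacle is the binomial identity in the second step; once it is in hand, the rest of the proof is a mechanical rearrangement. I would expect the author to derive that identity by the elementary route in order to keep the paper self-contained.
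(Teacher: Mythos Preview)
Your proof is correct and follows essentially the same route as the paper: both reduce the lattice-point count to $\sum_{s=0}^{t}\binom{s+k-1}{k-1}\binom{s+n-k-1}{n-k-1}$ (your change of variables $w_i=t-x_i$ is exactly the ``free space'' reformulation the paper packages as Proposition~\ref{ballsandboxes}), and then pass to the claimed form via Sur\'anyi's identity followed by hockey-stick, which the paper bundles as the ``Double Hockey-Stick Identity'' in the Appendix. The only cosmetic difference is that you identify the key product identity as Euler's transformation of ${}_2F_1$ at $\gamma=1$, whereas the paper cites it as Sur\'anyi's identity --- these are the same statement.
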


\begin{proof}
	Recall that $D_{k,n}(t)$ is the number of lattice points inside the dilated polytope $t\mathscr{P}(T_{k,n})$. Using Proposition \ref{caract}, this is:
	\begin{align*}
		D_{k,n}(t) &= \#\left(\mathbb{Z}^n \cap t\mathscr{P}(T_{k,n})\right)\\
		&= \# \left\{ x\in [0,t]^n : \sum_{i=1}^n x_i = tk \text{ and }\sum_{i=k+1}^{n} x_i\leq t \right\}.
	\end{align*}
	To count the number of elements of this set, we proceed as follows. Let us fix a number $0\leq j \leq t$ and set the sum $\sum_{i=k+1}^n x_i$ to be exactly $j$. The number of ways to achieve this is exactly the number of ways of putting $j$ indistinguishable balls into $n-k$ distinguishable boxes, which is just $\binom{n-k-1+j}{n-k-1}$.
	
	Now we have to count the number of ways of putting $tk-j$ indistinguishable balls into exactly $k$ distinguishable boxes, each of them having a capacity of $t$. Using Proposition \ref{ballsandboxes} in the appendix one has then:
		\begin{align} 
			D_{k,n}(t) &= \sum_{j=0}^t \binom{n-k-1+j}{n-k-1} \binom{k-1+tk - (tk-j)}{k-1}.\nonumber\\
			&= \sum_{j=0}^t \binom{n-k-1+j}{j} \binom{k-1+j}{j}\label{sumita}
		\end{align}
	Then, by Proposition \ref{doublestick} in the appendix, one gets the result of the statement.
\end{proof}


The formula presented in the preceding Theorem, and the one of equation \eqref{sumita} are useful for computations, but do not show the positivity of the coefficients of $D_{k,n}$. A first step towards that is to notice the following factorization:

\begin{lema}
	The following identity holds:
	\[ D_{k,n}(t) = \binom{t+n-k}{n-k} \sum_{j=0}^{k-1} \frac{n-k}{n-k+j} \binom{t}{j}\binom{k-1}{j}\]
\end{lema}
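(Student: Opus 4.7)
My plan is to take equation (2.2), namely
\[ D_{k,n}(t)=\sum_{s=0}^{t}\binom{n-k-1+s}{s}\binom{k-1+s}{s}, \]
as the starting point, and then show that the right-hand side of the lemma, call it $R(t)$, satisfies the same first-difference recurrence with the same initial value $R(0)=1$. Because both expressions are polynomials in $t$, agreement at all non-negative integers will force equality as polynomials.

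The first step is to absorb the prefactor $\binom{t+n-k}{n-k}$ inside the sum in $R(t)$. Using the trinomial identity $\binom{t+n-k}{n-k}\binom{t}{j}=\binom{t+n-k}{n-k+j}\binom{n-k+j}{n-k}$ together with $\frac{n-k}{n-k+j}\binom{n-k+j}{n-k}=\binom{n-k+j-1}{n-k-1}$, one rewrites
\[ R(t)=\sum_{j=0}^{k-1}\binom{t+n-k}{n-k+j}\binom{n-k+j-1}{j}\binom{k-1}{j}, \]
which concentrates all the $t$-dependence into a single leading binomial.

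The second step is to compute $R(t)-R(t-1)$. Pascal's rule collapses $\binom{t+n-k}{n-k+j}-\binom{t+n-k-1}{n-k+j}$ into $\binom{t+n-k-1}{n-k+j-1}$. Then I apply trinomial revision twice: first $\binom{t+n-k-1}{n-k+j-1}\binom{n-k+j-1}{j}=\binom{t+n-k-1}{j}\binom{t+n-k-1-j}{n-k-1}$, and then the symmetric swap $\binom{t+n-k-1}{j}\binom{t+n-k-1-j}{n-k-1}=\binom{t+n-k-1}{n-k-1}\binom{t}{j}$, which pulls out the factor independent of $j$. What remains inside is $\sum_{j=0}^{k-1}\binom{t}{j}\binom{k-1}{j}$, which Vandermonde evaluates as $\binom{t+k-1}{k-1}$. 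Hence
\[ R(t)-R(t-1)=\binom{t+n-k-1}{n-k-1}\binom{t+k-1}{k-1}=\binom{n-k-1+t}{t}\binom{k-1+t}{t}, \]
and this is exactly the $s=t$ term of the telescoping sum in (2.2), i.e. $D_{k,n}(t)-D_{k,n}(t-1)$.

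A direct check that $R(0)=1$ (only $j=0$ contributes since $\binom{0}{j}=0$ for $j\geq 1$), together with $D_{k,n}(0)=1$ from (2.2), completes the induction. The main obstacle will be finding the right sequence of binomial manipulations; in particular, the crux of the argument is spotting the second trinomial swap $\binom{t+n-k-1}{j}\binom{t+n-k-1-j}{n-k-1}=\binom{t+n-k-1}{n-k-1}\binom{t}{j}$, which is precisely what converts the expression into a shape where Vandermonde's identity finishes the computation cleanly.
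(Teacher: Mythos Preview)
Your proof is correct. Every step checks out: the trinomial rewriting of $R(t)$, the Pascal collapse, the two trinomial swaps, and the Vandermonde evaluation all hold, and the polynomial-identity argument from matching first differences and initial value is sound.

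Your route is genuinely different from the paper's. The paper works \emph{forward} from the telescoped sum \eqref{sumita}: it expands $\binom{k-1+j}{j}$ by Vandermonde, swaps the order of summation, applies the trinomial identity $\binom{r}{m}\binom{m}{k}=\binom{r}{k}\binom{r-k}{m-k}$, and then closes the inner sum with the Hockey-Stick identity to reach the factored form directly. You instead take the target expression $R(t)$, massage it into a form where the $t$-dependence sits in a single binomial, and then verify the \emph{first difference} $R(t)-R(t-1)$ matches the $s=t$ summand of \eqref{sumita}. The paper's approach is a single chain of algebraic transformations with no induction; yours is conceptually cleaner in that it reduces the identity to checking one term plus an initial condition, and it avoids the Hockey-Stick step entirely. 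On the other hand, the paper's intermediate form (your rewritten $R(t)=\sum_j\binom{t+n-k}{n-k+j}\binom{n-k+j-1}{j}\binom{k-1}{j}$) is exactly what one would reach midway through their computation, so the two arguments share a common waypoint even though they traverse it in opposite directions.
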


\begin{proof}
	The proof consists only of sum manipulations starting with equation \eqref{sumita}. Steps on numbered equations are justified below.
		\begin{align}
			D_{k,n}(t) &= \sum_{j=0}^t \binom{n-k-1+j}{j}\binom{k-1+j}{j}\nonumber\\
			&= \sum_{j=0}^t \binom{n-k-1+j}{j}\sum_{i=0}^{k-1} \binom{k-1}{k-1-i}\binom{j}{i}\label{vander}\\
			&= \sum_{i=0}^{k-1} \sum_{j=0}^t \binom{k-1}{i} \binom{n-k-1+j}{j}\binom{j}{j-i}\nonumber\\
			&= \sum_{i=0}^{k-1} \sum_{j=0}^t \binom{k-1}{i} \binom{n-k-1+j}{j-i}\binom{n-k-1+i}{i}\label{ident}\\
			&= \sum_{i=0}^{k-1} \binom{k-1}{i}\binom{n-k-1+i}{i} \sum_{j=0}^t \binom{n-k-1+j}{n-k-1+i}\nonumber\\
			&= \sum_{i=0}^{k-1} \binom{k-1}{i} \binom{n-k-1+i}{i} \binom{t+n-k-1}{n-k+i}\label{hs} \\
			&= \sum_{i=0}^{k-1} \binom{k-1}{i}\frac{n-k}{n-k+i} \binom{t}{j}\binom{t+n-k}{n-k}\label{last}
		\end{align}
	where in \eqref{vander} we used Vandermonde's Identity, in \eqref{ident} the identity $\binom{r}{m}\binom{m}{k}=\binom{r}{k}\binom{r-k}{m-k}$, in \eqref{hs} the Hockey-Stick Identity (also known as the parallel sumation formula \cite{knuth}) and in \eqref{last} just factorials simplifications.
\end{proof}

Observe that from this Lemma we get that $D_{k,n}(t)$ can be written as a product of a polynomial with positive coefficients: $\binom{t+n-k}{n-k}$ and a remaining factor, which we will call $R_{k,n}(t)$. It is:
	\[ R_{k,n}(t) = \sum_{j=0}^{k-1} \frac{n-k}{n-k+j}\binom{t}{j}\binom{k-1}{j} \]

Hence, if we prove that $R_{k,n}$ has positive coefficients, then we will be able to conclude the positivity of the coefficients of $D_{k,n}$. This is done in the following Lemma.

\begin{lema}
	\[ R_{k,n}(t) = \frac{1}{\binom{n-1}{k-1}} \sum_{j=0}^{k-1} \binom{n-k-1+j}{j}\binom{t+j}{j}\]
\end{lema}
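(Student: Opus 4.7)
The plan is to view both sides as polynomials of degree $k-1$ in $t$ expressed in the basis $\{\binom{t}{i}\}_{i=0}^{k-1}$ and to match coefficients, thereby reducing the lemma to a purely numerical binomial identity.

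First, I would apply Vandermonde's identity $\binom{t+j}{j} = \sum_{i=0}^{j}\binom{t}{i}\binom{j}{i}$ to the claimed expression and swap the order of summation, rewriting the right-hand side as
\[\frac{1}{\binom{n-1}{k-1}}\sum_{i=0}^{k-1}\binom{t}{i}\sum_{j=i}^{k-1}\binom{n-k-1+j}{j}\binom{j}{i}.\]
Since $R_{k,n}(t) = \sum_{i=0}^{k-1}\frac{n-k}{n-k+i}\binom{k-1}{i}\binom{t}{i}$ is already expanded in this basis, matching coefficients of $\binom{t}{i}$ reduces the lemma to proving, for every $0 \leq i \leq k-1$, the identity
\[\binom{n-1}{k-1}\cdot\frac{n-k}{n-k+i}\binom{k-1}{i} = \sum_{j=i}^{k-1}\binom{n-k-1+j}{j}\binom{j}{i}.\]

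Next, I would attack the sum on the right. Using the identity $\binom{r}{m}\binom{m}{k}=\binom{r}{k}\binom{r-k}{m-k}$ (the same tool invoked in the previous lemma) with $r = n-k-1+j$, $m=j$, $k=i$, one obtains $\binom{n-k-1+j}{j}\binom{j}{i} = \binom{n-k-1+j}{i}\binom{n-k-1+j-i}{j-i}$. After the reindexing $\ell = j-i$ and a small factorial rearrangement, the sum becomes
\[\binom{n-k-1+i}{i}\sum_{\ell=0}^{k-1-i}\binom{n-k-1+i+\ell}{\ell},\]
and the Hockey-Stick identity collapses this to $\binom{n-k-1+i}{i}\binom{n-1}{k-1-i}$.

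All that remains is to verify the closed-form identity $\binom{n-1}{k-1}\frac{n-k}{n-k+i}\binom{k-1}{i} = \binom{n-k-1+i}{i}\binom{n-1}{k-1-i}$, which is routine: expanding both sides in factorials one sees each equals $\frac{(n-1)!}{i!\,(k-1-i)!\,(n-k-1)!\,(n-k+i)}$. I do not anticipate a genuine obstacle here; the only step requiring mild care is the trinomial rewriting at the beginning of the second paragraph, whose justification is essentially the same identity already flagged in the preceding lemma as step \eqref{ident}.
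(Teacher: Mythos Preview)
Your proposal is correct and follows essentially the same route as the paper: both arguments expand each side in the basis $\{\binom{t}{i}\}$ via Vandermonde, apply the trinomial identity $\binom{r}{m}\binom{m}{k}=\binom{r}{k}\binom{r-k}{m-k}$, and finish with the Hockey-Stick identity, arriving at the same closed form $\binom{n-k-1+i}{i}\binom{n-1}{k-1-i}$ for the coefficient of $\binom{t}{i}$. The only cosmetic difference is that the paper applies the trinomial identity with the roles $m=j$, $k=j-i$ (after writing $\binom{j}{i}=\binom{j}{j-i}$), which makes $\binom{n-k-1+i}{i}$ factor out immediately and avoids your ``small factorial rearrangement''; your variant with $k=i$ is equally valid once that rearrangement is made.
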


\begin{proof}
	We have the following chain of equalities:
	\begin{align}
	\binom{n-1}{k-1} R_{k,n}(t) &= \binom{n-1}{k-1} \sum_{j=0}^{k-1} \frac{n-k}{n-k+j} \binom{t}{j}\binom{k-1}{j}\nonumber\\
	&= \sum_{j=0}^{k-1} \binom{n-1}{k-1} \frac{\binom{n-k-1+j}{j}}{\binom{n-k+j}{j}} \binom{t}{j}\binom{k-1}{j}\nonumber\\
	&= \sum_{j=0}^{k-1} \binom{n-1}{k-1}\binom{k-1}{k-1-j} \frac{\binom{n-k-1+j}{j}}{\binom{n-k+j}{j}} \binom{t}{j}\nonumber\\
	&= \sum_{j=0}^{k-1} \binom{n-1}{k-1-j} \binom{n-k+j}{n-k} \frac{\binom{n-k-1+j}{j}}{\binom{n-k+j}{j}} \binom{t}{j}\label{cruzado}\\
	&= \sum_{j=0}^{k-1} \binom{n-1}{k-1-j}\binom{n-k-1+j}{j}\binom{t}{j}\label{expr}
	\end{align}
	where in \eqref{cruzado} we used the identity $\binom{r}{m}\binom{m}{k} = \binom{r}{k}\binom{r-k}{m-k}$. On the other hand:
	\begin{align}
		\sum_{j=0}^{k-1} \binom{n-k-1+j}{j}\binom{t+j}{j} &= \sum_{j=0}^{k-1}\binom{n-k-1+j}{j}\sum_{i=0}^j \binom{t}{i}\binom{j}{j-i}\label{vanderr}\\
		&=\sum_{i=0}^{k-1}\sum_{j=i}^{k-1}\binom{t}{i}\binom{n-k-1+j}{j}\binom{j}{j-i}\nonumber\\
		&=\sum_{i=0}^{k-1}\sum_{j=i}^{k-1}\binom{t}{i} \binom{n-k-1+j}{j-i} \binom{n-k-1+i}{i}\label{cruzadoo}\\
		&=\sum_{i=0}^{k-1}\binom{n-k-1+i}{i}\binom{t}{i} \sum_{j=i}^{k-1}\binom{n-k-1+j}{j-i}\nonumber\\
		&= \sum_{i=0}^{k-1}\binom{n-k-1+i}{i}\binom{t}{i}\sum_{j=0}^{k-1-i} \binom{n-k-1+i+j}{j}\nonumber\\
		&= \sum_{i=0}^{k-1}\binom{n-k-1+i}{i}\binom{t}{i}\binom{n-1}{n-k+i}\label{hstick}
	\end{align}
	where in \eqref{vanderr} we used Vandermonde's Identity, in \eqref{cruzadoo} we used again $\binom{r}{m}\binom{m}{k} = \binom{r}{k}\binom{r-k}{m-k}$ and in \eqref{hstick} we used the classic Hockey Stick Identity. Observe that \eqref{expr} and \eqref{hstick} are equal, so the result of the statement follows.
\end{proof}

\begin{coro}
	The polynomial $D_{k,n}(t)$ has positive coefficients. If we call $d_{k,n,m} := [t^m] D_{k,n}(t)$, then it holds:
		\begin{align}
			D_{k,n}(t) &= \frac{1}{\binom{n-1}{k-1}} \binom{t+n-k}{n-k} \sum_{j=0}^{k-1}\binom{n-k-1+j}{j}\binom{t+j}{j}\label{formu}\\
			 d_{k,n,m} &= \frac{1}{(n-1)!} \sum_{j=0}^{k-1}\sum_{\ell=0}^j \frac{(k-1)!}{j!} \binom{n-k-1+j}{j} {j+1 \brack{\ell+1}} {n-k+1 \brack{m-\ell+1}}\label{formucoef}
		\end{align} 
\end{coro}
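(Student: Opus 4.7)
The plan is to deduce both assertions of the corollary as immediate consequences of the two preceding lemmas, and then to read off Ehrhart positivity directly from the resulting closed form.

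First I would establish \eqref{formu}. The first lemma of this section gives the factorization $D_{k,n}(t) = \binom{t+n-k}{n-k}\, R_{k,n}(t)$, and the second lemma rewrites
\[
R_{k,n}(t) \;=\; \frac{1}{\binom{n-1}{k-1}} \sum_{j=0}^{k-1}\binom{n-k-1+j}{j}\binom{t+j}{j}.
\]
Substituting the second into the first yields \eqref{formu} on the nose, with no further computation needed.

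For the positivity statement I would note that, for every $m\geq 0$,
\[
\binom{t+m}{m}\;=\;\frac{(t+1)(t+2)\cdots(t+m)}{m!}
\]
is a product of linear polynomials in $t$ with positive coefficients, and hence has positive coefficients as a polynomial in $t$. Consequently, \eqref{formu} expresses $D_{k,n}(t)$ as a nonnegative rational combination of products of such binomial polynomials, which is manifestly a polynomial in $t$ with positive coefficients. This is enough to conclude Ehrhart positivity.

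For the explicit coefficient formula \eqref{formucoef} I would rely on the standard expansion of the rising factorial via unsigned Stirling numbers of the first kind, $x(x+1)\cdots(x+r-1) = \sum_{k} {r \brack k}\, x^k$. Dividing by $t$ one obtains
\[
\binom{t+m}{m}\;=\;\frac{1}{m!}\sum_{\ell=0}^{m}{m+1 \brack \ell+1}\,t^{\ell}.
\]
I would apply this identity once with $m=n-k$ and once with $m=j$, multiply the two resulting expansions, and extract the coefficient of $t^m$ from \eqref{formu} as a Cauchy-product sum over $\ell$. Substituting the reciprocal $\binom{n-1}{k-1}^{-1} = (k-1)!(n-k)!/(n-1)!$ causes the $(n-k)!$ arising from the expansion of $\binom{t+n-k}{n-k}$ to cancel cleanly, leaving exactly the expression in \eqref{formucoef}. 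The only obstacle is purely notational bookkeeping of the index shifts in the Stirling entries ${j+1 \brack \ell+1}$ and ${n-k+1 \brack m-\ell+1}$, together with the observation that extending the inner summation to $0\leq \ell\leq j$ introduces only vanishing terms because ${n-k+1 \brack m-\ell+1}=0$ outside its natural range; these are routine and require no new idea.
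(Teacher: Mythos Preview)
Your proposal is correct and follows essentially the same route as the paper: formula \eqref{formu} is obtained by combining the two preceding lemmas, positivity is read off from the product form, and \eqref{formucoef} is derived via the Stirling-number expansion $[t^{\ell}]\,m!\binom{t+m}{m}={m+1\brack \ell+1}$ together with a Cauchy product and the simplification $\binom{n-1}{k-1}^{-1}=(k-1)!\,(n-k)!/(n-1)!$. The paper's own proof is terser but makes exactly these moves.
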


\begin{proof}
	The equation \eqref{formu} is just a consequence of the preceding Lemmas. From this equality, as we said above, the positivity of the coefficients is clear. 
	
	The computation of $d_{k,n,m}$ is a straightforward consequence of the fact that $[t^m]a!\binom{t+a}{a}$ is the Stirling Number of the first kind ${a+1}\brack{j+1}$. 
\end{proof}

\begin{obs}\label{remark}
	Notice that from our formula \eqref{formu} for $D_{k,n}$ it is evident that $D_{k,n}(t-1)$ has nonnegative coefficients.
\end{obs}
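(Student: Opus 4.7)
The plan is to verify the remark by a direct substitution $t\mapsto t-1$ in formula \eqref{formu} and then observing that each factor on the right-hand side becomes, after this shift, a polynomial in $t$ with manifestly nonnegative coefficients. Since a product and a nonnegative-coefficient linear combination of such polynomials again has nonnegative coefficients, the claim follows with essentially no computation.

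Concretely, I would start from
\[
D_{k,n}(t-1) \;=\; \frac{1}{\binom{n-1}{k-1}} \binom{t+n-k-1}{n-k} \sum_{j=0}^{k-1}\binom{n-k-1+j}{j}\binom{t+j-1}{j},
\]
and then rewrite the two binomials depending on $t$ as falling/rising products with root at $0$. Namely,
\[
\binom{t+n-k-1}{n-k} \;=\; \frac{t\,(t+1)\,(t+2)\cdots(t+n-k-1)}{(n-k)!},
\qquad
\binom{t+j-1}{j} \;=\; \frac{t\,(t+1)\cdots(t+j-1)}{j!}.
\]
The key point is that the shift $t\mapsto t-1$ turns the binomial $\binom{t+a}{a}$, whose lowest factor is $(t+1)$, into $\binom{t+a-1}{a}$, whose lowest factor is $t$. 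Hence both become products of linear polynomials in $t$ with nonnegative coefficients, so they are polynomials in $t$ with nonnegative coefficients.

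To finish, I would note that the prefactor $1/\binom{n-1}{k-1}$ is positive and that every binomial coefficient $\binom{n-k-1+j}{j}$ appearing as a weight in the sum is a nonnegative integer. Consequently $D_{k,n}(t-1)$ is a nonnegative linear combination of products of polynomials with nonnegative coefficients, and therefore has nonnegative coefficients itself. In particular, since $D_{k,n}(t) = D_{k,n}((t+1)-1)$ is obtained from $D_{k,n}(t-1)$ by the substitution $t\mapsto t+1$, which again preserves nonnegativity of coefficients, the positivity of the coefficients of $D_{k,n}(t)$ is recovered as a corollary.

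There is no genuine obstacle here: the only subtle point is recognizing that it is precisely the shift by $1$ that aligns the starting factor of each rising product with $t$ itself, making the nonnegativity visible by inspection. No combinatorial identities beyond the elementary factorial rewriting above are needed.
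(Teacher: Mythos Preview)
Your proposal is correct and is exactly the argument the paper has in mind: substituting $t\mapsto t-1$ in formula \eqref{formu} turns each factor $\binom{t+a}{a}$ into a rising product beginning at $t$, hence into a polynomial with nonnegative coefficients, and the remaining weights are nonnegative. You have simply spelled out what the paper calls ``evident''.
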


In \cite{ferroni} the author proved a combinatorial formula for the coefficients of the Ehrhart polynomial of the hypersimplex $\Delta_{k,n}$, which is the basis polytope of the uniform matroid $U_{k,n}$. Using that result, we can reformulate Conjecture \ref{cota} as follows:

\begin{conj}
	Let $M$ be a connected matroid of rank $k$ and $n$ elements. Let us call $i(M,t)$ its Ehrhart Polynomial. Then for every $m\in\{0,\ldots,n-1\}$ it holds:
	\[ d_{k,n,m} \leq [t^m] i(M,t) \leq e_{k,n,m},\]
	where 
	\begin{align*}
		d_{k,n,m}&=\frac{1}{(n-1)!} \sum_{j=0}^{k-1}\sum_{\ell=0}^j \frac{(k-1)!}{j!} \binom{n-k-1+j}{j} {j+1 \brack{\ell+1}} {n-k+1 \brack{m-\ell+1}},\\
		e_{k,n,m}&=\frac{1}{(n-1)!}\sum_{\ell=0}^{k-1} W(\ell,n,m+1)A(m,k-\ell-1),
	\end{align*}
	denoting $A$ the Eulerian numbers, and $W$ the weighted Lah numbers (defined in \cite{ferroni}).
\end{conj}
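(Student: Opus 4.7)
The conjecture asserts a coefficient-wise sandwich $d_{k,n,m}\leq [t^m]i(M,t)\leq e_{k,n,m}$, which is considerably stronger than the corresponding value-wise inequalities on $i(M,t)$. My plan is to treat the two bounds separately, since only one of them admits an obvious geometric starting point, and to emphasize that in both directions the real work is in passing from $h^*$-level inequalities to monomial coefficient inequalities.

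For the upper bound $i(M,t)\preceq i(U_{k,n},t)$, I would begin with the inclusion $\mathscr{P}(M)\subseteq \Delta_{n,k}$, noting that for connected $M$ both polytopes have dimension $n-1$. Stanley's monotonicity theorem for $h^*$-vectors of lattice polytopes of equal dimension then yields $h^{*}_{j}(\mathscr{P}(M))\leq h^{*}_{j}(\Delta_{n,k})$ for every $j$. However, translating such an inequality into a coefficient-wise inequality on the Ehrhart polynomial itself is not automatic, since the change-of-basis matrix from the $h^*$-basis $\{\binom{t+n-1-j}{n-1}\}_{j}$ to the monomial basis $\{t^{m}\}_{m}$ has mixed signs. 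The plan is therefore to combine Stanley's theorem with the explicit formula for $h^{*}(\Delta_{n,k})$ (involving Eulerian numbers) and with a suitable formula for $h^{*}(\mathscr{P}(M))$ in order to verify that the difference $i(U_{k,n},t)-i(M,t)$ expands in the monomial basis with nonnegative coefficients.

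For the lower bound $i(T_{k,n},t)\preceq i(M,t)$, the analogous polytope containment does not hold, as the author explicitly observes, so a different strategy is required. The most promising tool is the relaxation theorem proved later in the paper: a single circuit-hyperplane relaxation adds $D_{k,n}(t-1)$ to the Ehrhart polynomial, and by Remark \ref{remark} this polynomial already has nonnegative coefficients. Starting from $T_{k,n}$ and performing a sequence of relaxations thus produces matroids with coefficient-wise larger Ehrhart polynomials, so the conjecture follows on any relaxation-closed portion of $\mathscr{C}(k,n)$. For matroids unreachable by iterated relaxations from $T_{k,n}$, my fallback is a valuative approach: since the Ehrhart polynomial is a valuative matroid invariant, any polytopal subdivision of $\mathscr{P}(M)$ into matroid polytopes containing $\mathscr{P}(T_{k,n})$ as a piece would reduce the problem to nonnegativity of residual Ehrhart contributions via inclusion-exclusion. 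A more ambitious algebraic route is to show directly that $h^{*}_{j}(T_{k,n})\leq h^{*}_{j}(M)$ for all $j$, perhaps by exhibiting a shelling or half-open decomposition compatible with the minimal matroid.

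The main obstacle is the gap between $h^*$-level inequalities, which are geometric and relatively accessible, and the monomial-coefficient inequalities the conjecture demands. For the upper bound this gap must be bridged despite the mixed signs in the $h^*$-to-monomial transformation; for the lower bound the difficulty is compounded by the absence of a polytope containment to feed into Stanley's theorem in the first place. I expect that settling either bound in full generality will require new structural results about matroid polytopes in $\mathscr{C}(k,n)$, or at the very least a refined understanding of how the $h^*$-polynomial interpolates between $T_{k,n}$ and $U_{k,n}$ as $M$ ranges through the family.
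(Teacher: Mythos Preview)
The statement you are attempting to prove is labeled a \emph{Conjecture} in the paper, and the paper does not prove it. It is explicitly presented as a reformulation of Conjecture~\ref{cota}, supported only by computational verification for matroids with at most $8$ elements. There is therefore no proof in the paper to compare your proposal against.

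What you have written is not a proof but a research outline, and you yourself identify the essential gap: Stanley's monotonicity theorem gives you $h^*_j(\mathscr{P}(M))\leq h^*_j(\Delta_{n,k})$, but the transformation from the $h^*$-basis $\{\binom{t+n-1-j}{n-1}\}_j$ to the monomial basis $\{t^m\}_m$ is not sign-definite, so this does not by itself yield a coefficient-wise inequality on Ehrhart polynomials. Your lower-bound strategy via relaxations suffers from the fact that most connected matroids are not reachable from $T_{k,n}$ by iterated circuit-hyperplane relaxation (indeed, $T_{k,n}$ need not have a circuit-hyperplane at all), and your valuative fallback would require a subdivision of $\mathscr{P}(M)$ containing a copy of $\mathscr{P}(T_{k,n})$, which the paper explicitly remarks does not exist in general. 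These are precisely the obstructions that keep the statement a conjecture; your proposal names them accurately but does not overcome them.
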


\begin{obs}\label{monotonicity}
	This conjecture may make one fall in the temptation of saying that a matroid with more bases has bigger Ehrhart coefficients. This is not true in general. There are two matroids $M_1$ and $M_2$ of rank $3$ and cardinality $7$ such that $M_1$ has $29$ bases and $M_2$ has $30$ bases and yet the normalized volume of $M_1$ is bigger than that of $M_2$. Their set of bases are given by:
		 \begin{align*}
		 \mathscr{B}(M_1) &= \binom{[7]}{3} \smallsetminus \{ \{2,5,7\}, \{3,4,5\}, \{2,4,6\}, \{3,6,7\}, \{1,5,6\},\{1,2,3\}\},\\
		 \mathscr{B}(M_2) &= \binom{[7]}{3}\smallsetminus \{\{1,2,5\},\{1,2,4\},\{1,2,7\},\{1,2,6\},\{1,2,3\} \},
		 \end{align*}
	where $[7]$ denotes the set $\{1,2,3,4,5,6,7\}$ and the binomial coefficient notation stands for all the subsets of cardinality $3$.
	
	Their Ehrhart polynomials are respectively:
	\begin{align*}
		i(M_1,t) &=\frac{1}{6!}\cdot(242t^6 + 1464t^5 + 3860t^4 + 5940t^3 + 5618t^2 + 3036t + 720),\\
		i(M_2,t) &= \frac{1}{6!} \cdot(198t^6 + 1386t^5 + 4050t^4 + 6390t^3 + 5832t^2 + 3024t + 720).
	\end{align*}
\end{obs}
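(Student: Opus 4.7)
The statement is a concrete counterexample consisting of two explicit rank-$3$ matroids $M_1,M_2$ on the ground set $[7]$: the claim is that both collections $\mathscr{B}(M_1)$ and $\mathscr{B}(M_2)$ really are basis families, that $|\mathscr{B}(M_1)|=29<30=|\mathscr{B}(M_2)|$, that the two Ehrhart polynomials have the displayed form, and that nevertheless $[t^6]\,i(M_1,t) > [t^6]\,i(M_2,t)$. The plan is therefore purely verificational.

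The cleanest route to showing these are matroids is structural identification. The non-bases of $M_2$ are precisely the five triples of $[7]$ containing $\{1,2\}$; equivalently, $M_2$ is the parallel extension of the uniform matroid $U_{3,6}$ on $\{2,3,4,5,6,7\}$ obtained by adjoining $1$ in parallel to $2$, which is a standard (and hence automatically matroidal) construction. For $M_1$, the six listed non-bases together with the additional triple $\{1,4,7\}$ form a Steiner triple system on $[7]$ (one verifies that each of the $\binom{7}{2}=21$ pairs is covered exactly once), so they are six of the seven lines of the Fano plane; consequently $M_1$ is the non-Fano matroid $F_7^-$, obtained from $F_7$ by relaxing the circuit-hyperplane $\{1,4,7\}$, which is a matroid by standard references such as \cite{oxley}. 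The cardinalities $29=\binom{7}{3}-6$ and $30=\binom{7}{3}-5$ are then immediate.

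For the Ehrhart polynomials, both matroids are connected (the parallel extension of a connected matroid is connected, and $F_7^-$ is well known to be connected), so each matroid polytope has dimension $6$ and the Ehrhart polynomial has degree $6$. Two routes are available: either feed the vertex lists $\{e_B : B\in\mathscr{B}(M_j)\}\subset\mathbb{R}^7$ into \texttt{LattE}, as the introduction already does for every matroid on at most $8$ elements, or use Proposition \ref{desig} to write down the halfspace description from the flats, count lattice points of $t\,\mathscr{P}(M_j)$ for $t=0,1,\ldots,6$, and interpolate a polynomial of degree at most $6$. Independent sanity checks come from $i(M_j,0)=1$, $i(M_j,1)=|\mathscr{B}(M_j)|$, and the symmetry properties of the matroid polytope.

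The only real obstacle is arithmetic bookkeeping during interpolation, which is mitigated by the checks just mentioned. Once the two polynomials are confirmed, reading off the leading coefficients $242/6!$ and $198/6!$ gives normalized volumes $242$ and $198$ respectively, and the strict inequality $242>198$ establishes exactly what is claimed: that having fewer bases does not entail a smaller normalized volume, nor indeed smaller Ehrhart coefficients across the board.
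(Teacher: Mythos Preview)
The paper offers no proof of this remark; it simply records the two basis sets and the two Ehrhart polynomials, implicitly relying on the \texttt{LattE} computations already mentioned in the introduction. Your verification plan is therefore more detailed than the paper's own treatment, and it is correct. In particular, your structural identifications add real content that the paper omits: the seven triples obtained by adjoining $\{1,4,7\}$ to the six listed non-bases of $M_1$ do form a Steiner system $S(2,3,7)$, so $M_1$ is the non-Fano matroid $F_7^-$ (which the paper itself invokes in Section~5 as an example of relaxation); and the five non-bases of $M_2$ are exactly the triples containing $\{1,2\}$, so $M_2$ is the parallel extension of $U_{3,6}$ you describe. The sanity check $i(M_j,1)=|\mathscr{B}(M_j)|$ evaluates to $29$ and $30$ for the displayed polynomials, so everything is consistent. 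The only residual work is the machine computation (or interpolation through $t=0,\ldots,6$) of the two Ehrhart polynomials, which neither you nor the paper carries out by hand; your identifications do, however, settle the point the paper leaves entirely to the reader, namely that $\mathscr{B}(M_1)$ and $\mathscr{B}(M_2)$ are genuinely the basis families of matroids.
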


\section{The $h^*$-polynomial of $T_{k,n}$}

As a consequence of Theorem \ref{minehrhart} we have a formula for the $h^*$-polynomial of $T_{k,n}$.

\begin{coro}
	The $h^*$-polynomial of the matroid polytope of $T_{k,n}$ is given by the formula:
		\[h^*(T_{k,n},x) = \sum_{j=0}^{k-1} \binom{k-1}{j}\binom{n-k-1}{j} x^j. \]
	This polynomial is real rooted and thus the $h^*$-vector is log-concave and unimodal.
\end{coro}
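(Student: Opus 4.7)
The plan is to derive the formula directly from Theorem \ref{minehrhart} by reading off coefficients in the appropriate basis, and then to verify log-concavity by an elementary computation on binomial coefficients.

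First I would note that $\mathscr{P}(T_{k,n})$ has dimension $d = n-1$, since $T_{k,n}$ is connected on $n$ elements. By the standard definition, the $h^*$-polynomial $h^*(T_{k,n},x) = \sum_{j\ge 0} h^*_j\, x^j$ is the unique polynomial satisfying
\[
D_{k,n}(t) = \sum_{j=0}^{n-1} h^*_j \binom{t+n-1-j}{n-1},
\]
and the polynomials $\{\binom{t+n-1-j}{n-1}\}_{j=0}^{n-1}$ form a basis for polynomials of degree at most $n-1$ in $t$. The expression from Theorem \ref{minehrhart} is already written in exactly this basis, so the coefficients can be read off directly, yielding $h^*_j = \binom{k-1}{j}\binom{n-k-1}{j}$ for $0\le j\le k-1$ and $h^*_j = 0$ for $k\le j\le n-1$. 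This already gives the claimed formula.

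For log-concavity, I would invoke two classical facts. The first is that each row $j \mapsto \binom{m}{j}$ of Pascal's triangle is log-concave: a one-line computation shows $\binom{m}{j}^2/[\binom{m}{j-1}\binom{m}{j+1}] = (j+1)(m-j+1)/[j(m-j)] \ge 1$. The second is that the termwise product of two nonnegative log-concave sequences is log-concave, since $(a_jb_j)^2 = a_j^2 b_j^2 \ge (a_{j-1}a_{j+1})(b_{j-1}b_{j+1}) = (a_{j-1}b_{j-1})(a_{j+1}b_{j+1})$. Applying this to $a_j = \binom{k-1}{j}$ and $b_j = \binom{n-k-1}{j}$ gives log-concavity of the $h^*$-vector, and unimodality then follows because all entries are nonnegative.

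I do not expect any substantive obstacle here: once Theorem \ref{minehrhart} is in hand, both parts reduce to well-known properties of binomial coefficients. The only minor point to track is the truncation of the sum at $j=k-1$, so that $h^*_j$ genuinely vanishes for $k\le j\le n-1$, and by symmetry $h^*_j$ also vanishes for $j>n-k-1$; hence the effective degree of $h^*(T_{k,n},x)$ is $\min(k-1,n-k-1)$, which is consistent with the self-duality $T_{k,n}^{*}\cong T_{n-k,n}$ already observed in the paper.
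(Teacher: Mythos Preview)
Your proof is correct and follows essentially the same path as the paper. The paper phrases the first part as a ``routine computation'' via the generating function definition, while you read off the $h^*$-coefficients directly from the basis $\{\binom{t+n-1-j}{n-1}\}_{j=0}^{n-1}$ in which Theorem~\ref{minehrhart} is already written---a slightly cleaner packaging of the same fact; for log-concavity the paper writes out the ratio inequality explicitly, which is exactly your two general observations specialized to this case.
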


\begin{proof}
	 It is a routine computation working with Theorem \ref{minehrhart}, calculating the product $(1-x)^n h^*(T_{k,n},x)$, which by definition is the generating function of the Ehrhart polynomial \cite{beck}.
	 
	 The real-rootedness of this polynomial is a well known fact, see for example the Concluding Remarks in \cite{knauer2}. The log-concavity and the unimodality of the $h^*$-vector are a consequence of this (see for example \cite{branden1}).
\end{proof}

Although the Ehrhart polynomial of $T_{k,n}$ is a bit difficult to work with, the $h^*$-polynomial permits us to obtain some information of the polytope $\mathscr{P}(T_{k,n})$.

\begin{coro}
	The normalized volume of the matroid polytope $\mathscr{P}(T_{k,n})$ is given by:
	\[ \operatorname{Vol}(\mathscr{P}(T_{k,n})) = \binom{n-2}{k-1}.\]
\end{coro}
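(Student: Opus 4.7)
The plan is to exploit the fact that for a lattice polytope of dimension $d$, the normalized volume equals the value of its $h^*$-polynomial at $x=1$ (equivalently, the sum of the $h^*$-vector entries). Since $T_{k,n}$ is a connected matroid on $n$ elements, $\mathscr{P}(T_{k,n})$ has dimension $n-1$, and the previous corollary gives us a closed form for $h^*(T_{k,n},x)$. So the whole argument reduces to a single combinatorial identity.

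Concretely, I would write
\[
\operatorname{Vol}(\mathscr{P}(T_{k,n})) \;=\; h^*(T_{k,n},1) \;=\; \sum_{j=0}^{k-1}\binom{k-1}{j}\binom{n-k-1}{j},
\]
and then rewrite the right-hand summand using the symmetry $\binom{n-k-1}{j}=\binom{n-k-1}{n-k-1-j}$ to obtain
\[
\sum_{j=0}^{k-1}\binom{k-1}{j}\binom{n-k-1}{n-k-1-j}.
\]
This is precisely the Vandermonde convolution evaluated at $n-k-1$, which yields $\binom{(k-1)+(n-k-1)}{n-k-1}=\binom{n-2}{n-k-1}=\binom{n-2}{k-1}$, giving the claimed formula.

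There is essentially no obstacle here: the only thing to double-check is that the summation ranges match correctly under the symmetry (the terms with $j>n-k-1$ vanish, so extending or restricting the index set is harmless), and that the interpretation of normalized volume as $\sum_i h^*_i$ is being applied to the correct ambient dimension $n-1$. Both are standard, so the proof is just a one-line application of Vandermonde's identity after citing the $h^*$-formula from the preceding corollary.
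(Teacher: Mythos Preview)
Your proposal is correct and essentially identical to the paper's proof: both evaluate $h^*(T_{k,n},1)$ using the preceding formula and reduce the resulting sum to $\binom{n-2}{k-1}$ via Vandermonde's identity. The only cosmetic difference is that the paper applies the symmetry to $\binom{k-1}{j}$ rather than to $\binom{n-k-1}{j}$, which is immaterial.
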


\begin{proof}
	Since the volume is given by $h^*(T_{n,k},1)$, it suffices to do the computation:
	\begin{align*}
		h^*(T_{k,n},1) &= \sum_{j=0}^{k-1} \binom{k-1}{j}\binom{n-k-1}{j}\\
		&= \sum_{j=0}^{k-1}\binom{k-1}{k-1-j} \binom{n-k-1}{j}\\
		&= \binom{n-2}{k-1},
	\end{align*}
	where in the last step we used Vandermonde's Identity.
\end{proof}

\section{Relaxations of a Matroid}

We will discuss a matroidal operation that behaves nicely with the Ehrhart polynomial of the basis polytope. \\

Recall that if $M$ is a matroid on the ground set $E$ of rank $k$ and cardinality $n$, then a \textit{hyperplane} of $M$ is a coatom in the lattice of flats of $M$. Equivalently, a flat $F\subseteq E$ is said to be a hyperplane if $\rk(F)=k-1$.

If $H\subseteq M$ is a hyperplane and a circuit, then one can \textit{relax} the matroid $M$, declaring that $H$ is a basis. More precisely:

\begin{prop}
	Let $M$ be a matroid with set of bases $\mathscr{B}$ that has a circuit-hyperplane $H$. Let $\widetilde{\mathscr{B}}=\mathscr{B}\cup\{H\}$. Then $\widetilde{\mathscr{B}}$ is the set of bases of a matroid $\widetilde{M}$ on the same ground set as $M$.
\end{prop}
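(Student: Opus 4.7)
The plan is to verify directly that $\widetilde{\mathscr{B}}$ satisfies the basis exchange axiom. Two preparatory observations make the case analysis routine. First, because $H$ is simultaneously a hyperplane (of rank $k-1$) and a circuit, we have $|H|=\rk(H)+1=k$, so every element of $\widetilde{\mathscr{B}}$ has the same cardinality $k$ and every proper subset of $H$ is independent in $M$. Second, for any $y\notin H$ the set $H\cup\{y\}$ has rank $k$---because $H$ is a hyperplane---so its nullity equals $1$, which means $H$ is the \emph{unique} circuit contained in $H\cup\{y\}$; in particular every $k$-subset of $H\cup\{y\}$ other than $H$ itself is a basis of $M$.

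Then I would split the exchange check into three cases according to which of $B_1,B_2\in\widetilde{\mathscr{B}}$ equals $H$. If $B_1,B_2\in\mathscr{B}$ the required exchange is immediate from the axiom for $M$. If $B_1=H$ and $B_2\in\mathscr{B}$, pick $x\in H\setminus B_2$; since $\rk(B_2)=k>\rk(H)$ we can choose $y\in B_2\setminus H$, and the second observation above shows that $(H\setminus\{x\})\cup\{y\}$, a $k$-subset of $H\cup\{y\}$ different from $H$, is a basis of $M$ and hence lies in $\widetilde{\mathscr{B}}$.

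The only case that requires a short argument is $B_1\in\mathscr{B}$, $B_2=H$. Given $x\in B_1\setminus H$, set $I:=B_1\setminus\{x\}$ and examine $\rk(I\cup H)\in\{k-1,k\}$. If $\rk(I\cup H)=k-1$ then $\overline{I\cup H}$ is a rank-$(k-1)$ flat containing the hyperplane $H$ and thus equals $H$, so $I\subseteq H$; this forces $|B_1\cap H|=k-1$, there is a unique element $y\in H\setminus B_1$, and $(B_1\setminus\{x\})\cup\{y\}=H\in\widetilde{\mathscr{B}}$. If instead $\rk(I\cup H)=k$, then $H\not\subseteq\overline{I}$, so some $y\in H\setminus\overline{I}$ makes $I\cup\{y\}$ independent, hence a basis of $M$; such a $y$ cannot lie in $B_1$, for otherwise $y\in B_1\setminus I=\{x\}$ and then $y=x\notin H$, a contradiction. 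Thus $y\in H\setminus B_1$ and $(B_1\setminus\{x\})\cup\{y\}\in\mathscr{B}\subseteq\widetilde{\mathscr{B}}$.

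There is no serious obstacle here: the entire content of the proof lies in the rank behavior of $H\cup\{y\}$ for $y\notin H$---hyperplane plus circuit---and in the dichotomy $\rk(I\cup H)\in\{k-1,k\}$ used in the last case. Once the three cases are dispatched, $\widetilde{\mathscr{B}}$ satisfies the exchange axiom, and is therefore the family of bases of a matroid $\widetilde{M}$ on the same ground set as $M$.
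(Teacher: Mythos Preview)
Your proof is correct. The paper itself does not give an argument but simply cites \cite[Proposition 1.5.14]{oxley}, so your direct verification of the basis exchange axiom is a genuine addition rather than a different route to the same place. The two preparatory observations you isolate---that $|H|=k$ with every proper subset of $H$ independent, and that $H$ is the unique circuit in $H\cup\{y\}$ for any $y\notin H$---are exactly the content of ``circuit-hyperplane'' that makes the exchange go through, and your three-case split handles everything cleanly. The only spot where a reader might pause is Subcase~3b: it is worth stating explicitly that $\rk(I)=k-1$ (since $I$ is independent of size $k-1$) so that the dichotomy $\rk(I\cup H)\in\{k-1,k\}$ is forced; you use this implicitly when concluding $H\not\subseteq\overline{I}$ and when deducing that $I\cup\{y\}$ is a basis. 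With that one line made explicit, the argument is fully self-contained.
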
 

\begin{proof}
	See \cite[Proposition 1.5.14]{oxley}.
\end{proof}

The operation of declaring a circuit-hyperplane to be a basis is known in the literature by the name of \textit{relaxation}. Many famous matroids arise as a result of this operation on another matroid. For example the \textit{Non-Pappus matroid} is the result of relaxing a circuit-hyperplane on the \textit{Pappus matroid}, and analogously the \textit{Non-Fano} matroid can be obtained by a relaxation of the \textit{Fano} matroid (for some other examples see \cite{oxley}).

Of course, relaxing a circuit-hyperplane doesn't alter the rank of the matroid. It also preserves or increases its degree of connectivity (see \cite[Propositon 8.4.2]{oxley}). 

\begin{lema}
	Let $M$ be a matroid with set of bases $\mathscr{B}$ and a circuit-hyperplane $H$. Let $\widetilde{M}$ be the relaxed matroid. Then, the set of flats $\widetilde{\mathscr{F}}$ of $\widetilde{M}$ is given by:
	\[\widetilde{\mathscr{F}} = \left(\mathscr{F}\smallsetminus\{H\}\right) \cup \{F\subseteq H : |F| = |H|-1\},\]
	where $\mathscr{F}$ is the set of flats of $M$.
\end{lema}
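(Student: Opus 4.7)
The plan is to exploit that relaxation changes the independence structure in a minimally invasive way: the independent sets of $\widetilde{M}$ are exactly those of $M$ together with the single set $H$ itself, since every proper subset of the circuit $H$ was already independent in $M$. From this I would first deduce the rank identity $\rk_{\widetilde{M}}(S) = \rk_M(S)$ for every $S \neq H$, with $\rk_{\widetilde{M}}(H) = \rk_M(H) + 1 = k$. For $S \not\supseteq H$ the new independent set $H$ is not available as a subset of $S$; for $S \supsetneq H$, since $H$ is a hyperplane its $M$-closure equals $H$, so $\rk_M(S) = k$, and on the other hand $H \subseteq S$ gives $\rk_{\widetilde{M}}(S) = k$ as well.

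Armed with this rank dictionary, I would verify the four needed statements. First, $H$ is not a flat of $\widetilde{M}$, because for any $e \notin H$ (which exists since $H$ is a proper subset of $E$) we have $\rk_{\widetilde{M}}(H\cup\{e\}) \leq k = \rk_{\widetilde{M}}(H)$. Second, any $F \in \mathscr{F} \setminus \{H\}$ is still a flat: for $e \notin F$, if $F \cup \{e\} \neq H$ then the ranks in $M$ and $\widetilde{M}$ agree and the original flat condition survives; the only delicate subcase is $F \cup \{e\} = H$, but then $F \subsetneq H$ has $|F| = k-1$ and is independent, so $\rk_{\widetilde{M}}(F) = k-1 < k = \rk_{\widetilde{M}}(H)$. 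Third, each $F \subseteq H$ with $|F| = k-1$ is a new flat: it is independent of rank $k-1$, and adding any $e$ lands either in $H$ (yielding $H$, of rank $k$) or outside $H$ (yielding an independent $k$-set, again of rank $k$). Finally, no other flats can appear: any flat $\widetilde{F}$ of $\widetilde{M}$ that contains $H$ must, by $\rk_{\widetilde{M}}(\widetilde{F}) = k$ and the flat condition, equal $E$; if instead $\widetilde{F}$ and $H$ are incomparable, or $\widetilde{F} \subsetneq H$ with $|\widetilde{F}| \leq k-2$, then $\widetilde{F} \cup \{e\} \neq H$ for every $e$, so the ranks agree with those of $M$ and $\widetilde{F}$ was already a flat of $M$.

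There is no real conceptual obstacle here; the only thing to be careful about is the bookkeeping at the unique subset $H$ where the rank function changes. Once the rank identity is in place, the entire classification reduces to the case analysis triggered by whether the candidate $F \cup \{e\}$ coincides with $H$, which happens only when $F$ is a $(k-1)$-subset of $H$ and $e$ is the unique missing element, precisely where the new flats enter and the old flat $H$ exits.
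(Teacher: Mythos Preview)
Your argument is correct and follows essentially the same route as the paper: both proofs hinge on the observation that the rank functions of $M$ and $\widetilde{M}$ agree everywhere except at $H$, and then perform a case analysis on whether $F\cup\{e\}$ hits $H$. Your write-up is in fact more thorough than the paper's, which states the rank identity without justification, proves only the implication ``flat of $\widetilde{M}$ not in $\mathscr{F}$ $\Rightarrow$ $(k{-}1)$-subset of $H$'', and dismisses the remaining verifications (that $H$ leaves $\widetilde{\mathscr{F}}$, that old flats persist, and that the $(k{-}1)$-subsets of $H$ really are flats of $\widetilde{M}$) as ``easy''.
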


\begin{proof}
	Notice that the rank function $\widetilde{\rk}$ of $\widetilde{M}$ coincides with the rank function $\rk$ of $M$ with the only exception of $\rk(H)+1=\widetilde{\rk}(H)$.
	
	Let $F$ be a flat of $\widetilde{M}$ that is not a flat of $M$. Then $\widetilde{\rk}(F\cup e)>\widetilde{\rk}(F)$ for all $e\notin F$. Since $F\neq H$, we have that $\widetilde{\rk}(F)=\rk(F)$. Notice that there exists an $e$ such that $F\cup e=H$, since otherwise our inequality holds for all $e$ with $\rk$ instead of $\widetilde{\rk}$ and thus contradicting that $F$ is not a flat of $M$. Then $F\subseteq H$ and $|F|=|H|-1$, as claimed.
	
	The reverse inclusion follows from the fact that all those sets are  flats of $\widetilde{M}$.
\end{proof}

This characterization of the flats of the relaxed matroid $\widetilde{M}$ helps us to characterize the matroid polytope by deleting just one inequality in the description of the polytope of $M$. Namely, the precise inequality corresponding to the flat $H$.

\begin{prop}
	Let $M$ be a matroid of rank $k$ and cardinality $n$ with a circuit-hyperplane $H$. Then the matroid polytope of the relaxation $\widetilde{M}$ is given by:
		\[\mathscr{P}(\widetilde{M}) = \left\{ x\in \mathbb{R}^n_{\geq 0} : \sum_{i=1}^n x_i = k \text{ and } \sum_{i\in F} x_i \leq \rk(F)  \text{ for all } F\in\mathscr{F}(M)\smallsetminus \{H\}\right\}. \]
\end{prop}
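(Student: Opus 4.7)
The natural strategy is to combine the flat-inequality description of a matroid polytope (Proposition~\ref{desig}) with the characterization of $\widetilde{\mathscr{F}}$ in the preceding Lemma. Applying Proposition~\ref{desig} to $\widetilde{M}$ expresses $\mathscr{P}(\widetilde{M})$ as the set of nonnegative vectors with coordinate sum $k$ satisfying $\sum_{i \in F} x_i \le \widetilde{\rk}(F)$ for every $F \in \widetilde{\mathscr{F}}$, and the Lemma splits
\[
\widetilde{\mathscr{F}} = \bigl(\mathscr{F}(M) \setminus \{H\}\bigr) \cup \bigl\{F \subseteq H : |F| = |H|-1\bigr\}.
\]
The whole proposition then reduces to comparing these two sources of inequalities with the right-hand side of the statement.

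On the surviving flats $F \in \mathscr{F}(M) \setminus \{H\}$ the two rank functions agree, $\widetilde{\rk}(F) = \rk(F)$, because relaxation changes the rank of $H$ alone. The corresponding inequalities are therefore precisely those appearing on the right-hand side, which immediately gives the inclusion $\mathscr{P}(\widetilde{M}) \subseteq \text{RHS}$. For the reverse inclusion, it suffices to show that each new inequality $\sum_{i\in F} x_i \le k-1$ (note that $|H| = \rk(H)+1 = k$ forces every new flat to be an independent $(k-1)$-subset of $H$) is already implied by the others. My plan is to use the unit bounds $x_e \le 1$: for any non-loop $e \in F$ the closure $\operatorname{cl}_M(\{e\})$ is a rank-one flat, and whenever it differs from $H$ it lies in $\mathscr{F}(M) \setminus \{H\}$ and delivers $x_e \le 1$ after combining with $x_j \ge 0$ on $\operatorname{cl}_M(\{e\}) \setminus \{e\}$; summing these bounds over $e \in F$ yields the desired inequality.

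The main obstacle is the degenerate case in which $\operatorname{cl}_M(\{e\}) = H$ for some $e \in H$, which can occur when $H$ itself has rank one (so $k = 2$ and $H$ is a parallel class). There the argument above fails because the needed unit bounds are not generated by the surviving flats, and a separate argument — perhaps invoking connectedness of $M$ or treating this low-dimensional case by hand — will be needed to close the proof.
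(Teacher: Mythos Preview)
Your approach is the paper's: apply Proposition~\ref{desig} to $\widetilde{M}$, use the preceding Lemma to identify $\widetilde{\mathscr{F}}$, and argue that the new inequalities coming from the $(k-1)$-subsets of $H$ are redundant because each such $F$ is independent and $\sum_{i\in F} x_i \le |F|$ follows from the unit bounds $x_i\le 1$. The paper is in fact terser than you are: it simply asserts that the new inequalities are ``trivially implied by the inequalities $x_i\le 1$'' without tracking where those unit bounds originate among the surviving flat constraints.

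Your caution about the $k=2$ case is well-placed, but the obstacle is worse than you suggest: the proposition as stated is \emph{false} when $k=2$, even for connected $M$. Take $M=T_{2,4}$, the connected rank-$2$ matroid on $\{1,2,3,4\}$ whose unique non-basis is the parallel pair $H=\{3,4\}$. The flats of $M$ are $\varnothing,\{1\},\{2\},\{3,4\},\{1,2,3,4\}$, so after deleting $H$ the right-hand side imposes only $x\ge 0$, $\sum_i x_i=2$, $x_1\le 1$, $x_2\le 1$; the point $(0,0,2,0)$ satisfies all of these yet lies outside $\mathscr{P}(\widetilde{M})=\mathscr{P}(U_{2,4})$. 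Hence neither connectedness nor a direct low-dimensional argument can ``close'' this case --- the statement itself needs the hypothesis $k\ge 3$, or else the box constraints $x_i\le 1$ should be added explicitly to the right-hand side. For $k\ge 3$ your argument is complete, since then $\operatorname{cl}_M(\{e\})$ has rank $1<k-1=\rk(H)$ for every $e\in H$ and so cannot coincide with $H$.
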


\begin{proof}
	Using the notation of the preceding Lemma, it suffices to see that the inequalities that come from flats of $\widetilde{M}$ of the form $F=H\smallsetminus h$ with $h\in H$ are superfluous.
	
	Indeed, since in that case $F$ is independent, the inequality $\sum_{i\in F} x_i \leq \rk(F)$ is trivially implied by the inequalities $x_i\leq 1$.
\end{proof}

The following results state the exact relation between minimal matroids and the operation of circuit-hyperplane relaxation on the language of matroid subdivisions.

\begin{teo}
    Let $M$ be a (connected) matroid of rank $k$ and cardinality $n$ with a circuit-hyperplane $H$ and let $\widetilde{M}$ be the relaxed matroid. Then the polytope $\widetilde{\mathscr{P}}$ of $\widetilde{M}$ is obtained by stacking the polytope of the minimal matroid $T_{k,n}$ through a facet of $\mathscr{P}$. 
\end{teo}

\begin{proof}
    Notice that $\widetilde{\mathscr{P}}$ contains all the vertices of $\mathscr{P}$ and an extra vertex corresponding to $H$. If we use the characterization of the polytope of a matroid (see \cite[Theorem 4.1]{GGMS}), we have that $H$ has $k(n-k)$ adjacent vertices, corresponding to the bases of $\widetilde{M}$ (and hence of $M$) that differ in exactly one element with $H$. To prove that there are indeed $k(n-k)$ such bases, let us call $H=\{h_1,\ldots,h_k\}$. Since $H$ is a circuit-hyperplane of $M$, if we call $\{e_1,\ldots,e_{n-k}\}$ the elements in the complement of $H$, we have that
		\[B_{ij} := (H\smallsetminus \{h_i\})\cup \{e_j\}\]
	is a basis of $M$ for each $1\leq i\leq k$ and each $1\leq j\leq n-k$. These correspond to the $k(n-k)$ vertices adjacent to $H$ in $\widetilde{\mathscr{P}}$. Also, for each $i$ and $j$ we have that $B_{ij}$ is adjacent with all $B_{i'j}$ and all $B_{ij'}$ for $i'\neq i$ and $j'\neq j$. All this amounts to say that if we restrict ourselves to the polytope $\mathscr{Q}$ given by the $k(n-k)+1$ vertices given by $H$ and all $B_{ij}$, it is in fact the polytope of a minimal matroid.
\end{proof}

An immediate consequence of the above subdivision is that circuit-hyperplane relaxation behaves nicely with Ehrhart polynomials.

\begin{coro}
	Let $M$ be a connected matroid of rank $k$ and cardinality $n$ with a circuit-hyperplane $H$. Let $i(M,t)$ and $i(\widetilde{M},t)$ denote the respective Ehrhart polynomials of their polytopes. The following equality holds:
		\[ i(\widetilde{M},t) = i(M,t)+D_{k,n}(t-1).\]
	In particular, if $M$ is Ehrhart positive so is $\widetilde{M}$.
\end{coro}

\begin{proof}
	Using the notation of the proof of the preceding Theorem, we know that:
		\[ \widetilde{\mathscr{P}} = \mathscr{P}\cup \mathscr{Q},\]
	and that $\mathscr{P}\cap\mathscr{Q}$ is a facet of $\mathscr{P}$ and $\mathscr{Q}$. So an inclusion-exclusion argument reveals now that:
		\[ i(\widetilde{M},t) = i(M,t) + D_{k,n}(t) - S(t),\]
	where $S(t)$ is the Ehrhart polynomial of the facet of $\mathscr{Q}$ consisting of all the $k(n-k)$ bases of $T_{k,n}$ containing a red edge. It is evident from Proposition \ref{caract} that this facet of $\mathscr{Q}$ can be interpreted as:
		\[\left\{x\in [0,1]^n: \sum_{i=1}^n x_i =k \text{ and} \sum_{i=k+1}^n x_i = 1\right\},\]
	and then the number of integer points in a dilation by the factor $t$ of this facet is given by:
		\[
			S(t) = \#\left\{x\in \mathbb{Z}^n \cap [0,t]^n : \sum_{i=1}^n x_i = kt \text{ and } \sum_{i=k+1}^n x_i = t \right\},
		\]
	from which, using the same \textit{balls and boxes} reasoning, exactly as in the proof of Theorem \ref{minehrhart}, we see that
		\[ S(t) = \binom{n-k-1+t}{n-k-1}\binom{k-1+t}{k-1},\]
	and we have from equation \eqref{sumita} that $D_{k,n}(t)-S(t)$ is equal then to $D_{k,n}(t-1)$. We conclude then the Ehrhart positivity of $\widetilde{M}$ given that $i(M,t)$ is assumed to have positive coefficients, recalling Remark \ref{remark}.
\end{proof}

\begin{obs}
	In this case we have that \textit{adding a vertex} to our matroid polytope does increase Ehrhart coefficients, cf. Remark \ref{monotonicity}. 
\end{obs}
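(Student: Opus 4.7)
The plan is to combine the preceding theorem with Remark \ref{remark}. That theorem gives $i(\widetilde M,t) - i(M,t) = D_{k,n}(t-1)$, so the coefficient-wise comparison reduces entirely to a positivity check on the single polynomial $D_{k,n}(t-1)$: one must show that each of its coefficients is nonnegative, and strictly positive in every degree $m\in\{1,\ldots,n-1\}$.

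I would start from the product formula \eqref{formu} with $t$ replaced by $t-1$, writing
\[ D_{k,n}(t-1) \;=\; \frac{1}{\binom{n-1}{k-1}}\binom{t+n-k-1}{n-k}\sum_{j=0}^{k-1}\binom{n-k-1+j}{j}\binom{t+j-1}{j}. \]
The key observation is that for every $a\ge 1$, the polynomial $\binom{t+a-1}{a} = \tfrac{1}{a!}\,t(t+1)(t+2)\cdots(t+a-1)$ is a product of linear factors with nonnegative coefficients; consequently it has nonnegative coefficients in every degree and vanishing constant term. The factor $\binom{t+n-k-1}{n-k}$ is of this shape, and its expansion has a strictly positive coefficient on each of $t, t^2, \ldots, t^{n-k}$. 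The summation indexed by $j$ in turn has constant term $1$ (contributed by the $j=0$ summand, where $\binom{t-1}{0}=1$) together with higher-degree contributions that have nonnegative coefficients. Multiplying these two nonnegative quantities, one sees that $D_{k,n}(t-1)$ has nonnegative coefficients throughout, and strict positivity in each degree $m\in\{1,\ldots,n-1\}$: the $j=0$ summand alone already contributes a strictly positive amount to each such coefficient via $\binom{t+n-k-1}{n-k}$, and higher summands only add further nonnegative contributions. The constant term vanishes because $\binom{n-k-1}{n-k}=0$ forces $D_{k,n}(-1)=0$, which is compatible with the fact that $i(M,0)=i(\widetilde M,0)=1$.

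Assembling these facts, we conclude that for $m=1,\ldots,n-1$ the coefficient of $t^m$ in $i(\widetilde M,t)$ is strictly larger than that of $i(M,t)$, while the constant coefficients agree; this is exactly the content of the remark. The main interpretive point, in contrast with Remark \ref{monotonicity}, is that adding an arbitrary basis to a matroid need not produce Ehrhart-coefficient monotonicity, whereas the relaxation operation is special because the resulting difference of Ehrhart polynomials is forced to be the single highly structured polynomial $D_{k,n}(t-1)$, whose coefficients we have complete control over. There is no real technical obstacle: the argument is a direct positivity reading of the explicit formula \eqref{formu}.
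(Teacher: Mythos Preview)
Your approach matches the paper's: the remark is justified there simply by combining the preceding theorem with Remark \ref{remark}, and you do exactly this, only spelling out the positivity reading of formula \eqref{formu} more explicitly.

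There is, however, a genuine gap in your argument for \emph{strict} positivity in every degree $m\in\{1,\ldots,n-1\}$. You claim that ``the $j=0$ summand alone already contributes a strictly positive amount to each such coefficient via $\binom{t+n-k-1}{n-k}$.'' But $\binom{t+n-k-1}{n-k}$ is a polynomial of degree $n-k$, so it contributes nothing to the coefficients of $t^{n-k+1},\ldots,t^{n-1}$. For those top $k-1$ degrees you need the higher-$j$ summands, and you have only asserted that they give \emph{nonnegative} contributions.

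The fix is easy. Write $D_{k,n}(t-1)=\tfrac{1}{\binom{n-1}{k-1}}A(t)B(t)$ with $A(t)=\binom{t+n-k-1}{n-k}$ and $B(t)=\sum_{j=0}^{k-1}\binom{n-k-1+j}{j}\binom{t+j-1}{j}$. Then $A$ has strictly positive coefficients in degrees $1,\ldots,n-k$, and $B$ has strictly positive coefficients in degrees $0,\ldots,k-1$ (each $\binom{t+j-1}{j}$ for $j\ge 1$ contributes positively to degrees $1,\ldots,j$, and the $j=0$ term gives the constant $1$). For any $1\le m\le n-1$ one can choose $i=\min(m,n-k)$ and $\ell=m-i$; then $1\le i\le n-k$ and $0\le \ell\le k-1$, so $[t^i]A\cdot[t^\ell]B>0$ contributes to $[t^m](AB)$. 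This gives the strict inequality you asserted.
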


\begin{obs}
    It is worth noting that the case of the presence of a circuit-hyperplane is the only scenario on which one can add just one basis and preserve the matroid structure \cite{mills}. To be precise, if $\mathscr{B}$ is the set of bases of a matroid $M$ and $H$ is a subset such that $\mathscr{B}\sqcup \{H\}$ is also the set of bases of a matroid, this means that $H$ was originally a circuit-hyperplane of $M$. For a proof of this result one can also read \cite[Lemma 6]{truemper}.
\end{obs}

Of course, one has an equivalent version of the above result in the language of $h^*$-polynomials.

\begin{coro}
	If $M$ is a matroid of rank $k$ and cardinality $n$ with a circuit-hyperplane $H$ and $\widetilde{M}$ is the relaxed matroid, then:
		\[ h^*(\widetilde{M},t) = h^*(M,t) + th^*(T_{k,n},t). \]
\end{coro}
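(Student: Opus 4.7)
The plan is to translate the polynomial identity $i(\widetilde{M},t) = i(M,t) + D_{k,n}(t-1)$ of the previous theorem into the language of $h^*$-polynomials via the standard generating function
\[ \sum_{t\geq 0} i(P,t)\,x^t = \frac{h^*(P,x)}{(1-x)^{d+1}}, \]
where $d=\dim P$. The three polytopes $\mathscr{P}(M)$, $\mathscr{P}(\widetilde{M})$ and $\mathscr{P}(T_{k,n})$ are all matroid polytopes of connected matroids on $n$ elements (for $M$ this is the standing hypothesis of the preceding theorem, for $\widetilde{M}$ this follows from the fact that relaxation preserves connectivity, and for $T_{k,n}$ from Proposition \ref{grafo}). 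Hence all three have dimension $n-1$, so the three generating functions share the common denominator $(1-x)^n$.

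The first step is to observe that $D_{k,n}(-1)=0$. This is immediate by evaluating the identity of the preceding theorem at $t=0$: both $i(\widetilde{M},0)$ and $i(M,0)$ equal $1$, so $D_{k,n}(-1)=0$. (Alternatively, the closed form \eqref{formu} makes it visible since $\binom{t+n-k}{n-k}$ vanishes at $t=-1$.) With this in hand, a simple index shift gives
\[ \sum_{t\geq 0} D_{k,n}(t-1)\,x^t \;=\; D_{k,n}(-1) + x\sum_{s\geq 0} D_{k,n}(s)\,x^s \;=\; \frac{x\cdot h^*(T_{k,n},x)}{(1-x)^n}. \]

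To finish, I sum the identity $i(\widetilde{M},t)=i(M,t)+D_{k,n}(t-1)$ against $x^t$ for $t\geq 0$ and apply the three generating function expressions:
\[ \frac{h^*(\widetilde{M},x)}{(1-x)^n} \;=\; \frac{h^*(M,x)}{(1-x)^n} + \frac{x\cdot h^*(T_{k,n},x)}{(1-x)^n}. \]
Clearing the common denominator $(1-x)^n$ yields the claimed equality. There is no real obstacle here; the only small point requiring attention is the vanishing of $D_{k,n}(-1)$, which is precisely what allows the shift $D_{k,n}(t-1) \leftrightarrow x\cdot D_{k,n}(t)$ to happen cleanly at the level of generating functions.
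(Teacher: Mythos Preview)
Your proof is correct and follows exactly the approach indicated in the paper, which simply says the result is immediate from the definition of the $h^*$-polynomial as the numerator of the Ehrhart generating function. You have spelled out the details that the paper omits, including the verification that $D_{k,n}(-1)=0$ and the matching of dimensions, but the underlying argument is the same.
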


\begin{proof}
	The result follows by using the definition of the $h^*$-polynomial as the numerator of the generating function of the Ehrhart polynomial.
\end{proof}

It seems likely that if there is any hope of giving a combinatorial interpretation of the coefficients of the $h^*$-vector of a matroid, then the preceding Corollary might help to build an intuition of what these elements are counting.

\begin{obs}
	Now we can construct examples of non-isomorphic and non-dual connected matroids that have the same Ehrhart Polynomial and the same Tutte polynomial. Notice that if $M$ has a circuit-hyperplane $H$ and $\widetilde{M}$ is the relaxation, then
		\begin{equation}
		 T_{\widetilde{M}}(x,y) = T_M(x,y)-xy+x+y,\label{tutte}
		 \end{equation}
	where $T_M$ and $T_{\widetilde{M}}$ denote the Tutte polynomials (recall that the rank function coincides everywhere except in $H$, so using the definition yields directly to \eqref{tutte}). Hence, picking two non-isomorphic matroids that can be relaxed to the same matroid, one may construct such examples.
	
	For instance, consider the matroids $M_1$ and $M_2$ of rank $3$ and cardinality $7$ whose set of bases consist of $\mathscr{B}_1$ and $\mathscr{B}_2$ given by:
		\[ \mathscr{B}_1 = \binom{[7]}{3}\smallsetminus \left\{ \{1,2,3\}, \{4,5,6\} \right\},\]
		\[ \mathscr{B}_2 = \binom{[7]}{3}\smallsetminus \left\{ \{1,2,3\}, \{3,4,5\} \right\}.\]
	They can be seen to be indeed matroids that are not isomorphic, that can be relaxed twice to obtain the uniform matroid $U_{3,7}$ and hence have the same Ehrhart and the same Tutte polynomial. Yet, their polytopes are not even combinatorially equivalent, since for instance their $f$-vectors are different:
		\[ f_1 = (1, 33, 186, 325, 248, 92, 16, 1),\]
		\[ f_2 = (1, 33, 186, 326, 249, 92, 16, 1). \]
\end{obs}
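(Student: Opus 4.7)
The plan has three pieces, matching the three claims of the remark: the Tutte identity \eqref{tutte}, the verification that $M_1$ and $M_2$ are genuine non-isomorphic matroids that relax to $U_{3,7}$, and the combinatorial inequivalence of the two polytopes.

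For \eqref{tutte}, I would start from the rank--nullity form $T_M(x,y) = \sum_{A\subseteq E}(x-1)^{\rk(M)-\rk(A)}(y-1)^{|A|-\rk(A)}$ and observe that a circuit-hyperplane relaxation changes the rank function at exactly one subset, $A = H$. Indeed, subsets of $H$ remain independent with their cardinality as rank; sets not containing $H$ have the same maximal independent subsets before and after (the only newly independent set is $H$ itself); and proper supersets of $H$ already attained rank $k$ in $M$ because $H$ is a hyperplane. At $A = H$ the summand jumps from $(x-1)(y-1)$ to $1$, so $T_{\widetilde{M}} - T_M = 1 - (x-1)(y-1) = x + y - xy$, which is exactly \eqref{tutte}.

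For the explicit examples I would invoke the paving-matroid criterion: a family $\mathcal{N}$ of $r$-subsets is the collection of rank-$r$ circuits of a rank-$r$ paving matroid on $[n]$ iff any two members of $\mathcal{N}$ meet in at most $r-2$ elements. With $r = 3$ this becomes pairwise intersection of size at most $1$, which holds for both pairs since $\{1,2,3\} \cap \{4,5,6\} = \varnothing$ and $\{1,2,3\} \cap \{3,4,5\} = \{3\}$. Hence $M_1, M_2$ are rank-$3$ paving matroids, every $3$-element non-basis is simultaneously a circuit and a hyperplane, and relaxing the missing bases in sequence converts each $M_i$ into $U_{3,7}$ after two steps; connectivity follows from the absence of loops and coloops. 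They are non-isomorphic, since any isomorphism preserves the intersection pattern among the circuit-hyperplanes, and non-dual, since the dual has rank $n - k = 4 \neq 3$. Iterating the preceding theorem and the Tutte identity just established along the two-step relaxation chain now yields $i(U_{3,7},t) = i(M_j,t) + 2 D_{3,7}(t-1)$ and $T_{U_{3,7}}(x,y) = T_{M_j}(x,y) + 2(x+y-xy)$ for $j=1,2$, whence $i(M_1,t) = i(M_2,t)$ and $T_{M_1} = T_{M_2}$.

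The $f$-vectors come from enumerating the face lattices of $\mathscr{P}(M_1)$ and $\mathscr{P}(M_2)$ directly. I would begin from the facet description of Proposition \ref{desig}, prune the redundant hyperplane inequalities, and count faces by their supporting flats; in practice this is delegated to polymake since each polytope is $6$-dimensional with $33$ vertices. I expect this last step to be the main obstacle, as a clean conceptual argument seems elusive. One intuitive account is that the two excised vertices of $\Delta_{7,3}$ lie at Hamming distance $6$ in $\mathscr{P}(M_1)$ but only $4$ in $\mathscr{P}(M_2)$, so the two cutouts interfere with different numbers of $2$- and $3$-dimensional faces of the hypersimplex, producing the observed discrepancies $325$ vs $326$ and $248$ vs $249$ while leaving the other entries of the $f$-vector unchanged.
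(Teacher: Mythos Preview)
Your argument is correct and matches the paper's own justification, which is itself only a sketch inside the remark: both derive \eqref{tutte} from the fact that relaxation alters the rank function only at $H$, both observe that $M_1,M_2$ are (sparse paving) matroids each admitting two relaxations to $U_{3,7}$, and both defer the $f$-vector computation to software. The one slip is that absence of loops and coloops does not by itself imply connectedness (consider $U_{1,2}\oplus U_{1,2}$); you should instead check directly that any two elements of $[7]$ lie in a common circuit of $M_i$, or equivalently that the $33$ basis vectors affinely span the full $6$-dimensional hyperplane $\sum x_i=3$.
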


\section{Appendix}

We have collected in this final section some combinatorial results that are used throughout the proofs of our formulas for $D_{k,n}$.

\begin{prop}\label{ballsandboxes}
	Let $n$ and $c_1\geq \ldots\geq c_k$ be nonnegative integers such that $n\geq \sum_{i=1}^{k-1} c_i$. The number of ways $N$ of putting exactly $n$ indistinguishable balls into $k$ distinguishable boxes of capacities $c_1,c_2,\ldots,c_k$ is given by:
		\[N = \binom{k-1 + \sum_{i=1}^k c_i - n}{k-1}.\]
\end{prop}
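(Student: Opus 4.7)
The plan is to count the solutions via complementary variables and to use the hypothesis $n\geq \sum_{i=1}^{k-1} c_i$ to show that the capacity constraints become vacuous, so that the count reduces to an unconstrained stars-and-bars computation.

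More precisely, let $x_i$ denote the number of balls placed in box $i$. I want to count the integer solutions of
\[ x_1+\cdots+x_k = n, \qquad 0\leq x_i\leq c_i.\]
Setting $y_i := c_i - x_i$ and $S := \sum_{i=1}^k c_i$, this becomes the problem of counting nonnegative integer solutions of
\[ y_1+\cdots+y_k = S-n, \qquad 0\leq y_i\leq c_i.\]

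The key observation is that the hypothesis $n\geq \sum_{i=1}^{k-1} c_i$ gives
\[ S-n \;\leq\; S - \sum_{i=1}^{k-1} c_i \;=\; c_k.\]
Since $c_1\geq c_2\geq\cdots\geq c_k$, we have $c_k\leq c_i$ for every $i$, and moreover each individual $y_i$ cannot exceed the total sum $y_1+\cdots+y_k = S-n$. Combining these two bounds yields $y_i \leq S-n \leq c_k \leq c_i$ automatically, so the upper-bound constraints $y_i\leq c_i$ are redundant.

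Therefore $N$ equals the number of nonnegative integer solutions of $y_1+\cdots+y_k = S-n$, which by the standard stars-and-bars formula is $\binom{k-1+S-n}{k-1}$, matching the statement. There is no real obstacle here; the only subtle point is verifying that both $c_k \leq c_i$ and $y_i \leq S-n$ are needed together to eliminate the upper bound, which is the reason for the monotonicity assumption on the $c_i$.
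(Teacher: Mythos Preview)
Your proof is correct and follows essentially the same approach as the paper: both pass to the complementary variables $y_i = c_i - x_i$ (the paper calls them ``free spaces'' $f_i$), observe that the upper bounds $y_i\leq c_i$ become vacuous via the chain $y_i \leq S-n \leq c_k \leq c_i$, and finish with stars and bars.
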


\begin{proof}
	Note that instead of thinking of putting balls in a box, we can think of \textit{leaving free space} in a box.
	
	The sum of free spaces in any possible distribution will be exactly $\sum_{i=1}^k c_i - n$. Thus we have to assign free spaces $f_1,\ldots,f_k$ to every box in such a way that their sum is: 
		\[ f_1 + \ldots + f_k = \sum_{i=1}^k c_i - n,\]
	and we are given the constraint $0\leq f_i\leq c_i$, of which the inequalities $f_i\leq c_i$ are superfluous since the constraints $f_i\geq 0$ for all $i$ (it is, \textit{all of them}) already imply that
		\[f_i\leq \sum_{i=1}^k c_i - n \leq c_k \leq c_i.\]
	Hence we just have to count the number of ways to put $\sum_{i=1}^k c_i - n$ indistinguishable balls into $k$ distinguishable boxes, which gives the desired result.
\end{proof}

\begin{lema}[Sur\'anyi's Identity]
	\[ \binom{r+j}{r} \binom{s+j}{s} = \sum_k \binom{r}{k}\binom{s}{k}\binom{j+r+s-k}{r+s}\]
\end{lema}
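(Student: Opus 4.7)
The plan is to prove Sur\'anyi's identity by the coefficient-extraction (``snake oil'') method, reducing first to a generating function equality and then to a finite alternating sum that can be evaluated via the Pfaff--Saalsch\"utz summation for balanced $_3F_2$ hypergeometric series.

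First I would compute the generating function in $j$ of the right-hand side. Interchanging the order of summation and using the standard identity $\sum_{j\geq 0}\binom{N+j}{N}x^j = (1-x)^{-(N+1)}$ gives
$$\sum_{j\geq 0} x^j \sum_k \binom{r}{k}\binom{s}{k}\binom{r+s+j-k}{r+s} \;=\; \frac{\sum_k \binom{r}{k}\binom{s}{k}\,x^k}{(1-x)^{r+s+1}}.$$
Sur\'anyi's identity is therefore equivalent to showing that the generating function $\sum_{j\geq 0}\binom{r+j}{r}\binom{s+j}{s}x^j$ equals the same rational function. Multiplying through by $(1-x)^{r+s+1}$ and extracting the coefficient of $x^k$ collapses the problem to the finite alternating sum identity
$$\sum_{\ell=0}^{k}(-1)^\ell\binom{r+s+1}{\ell}\binom{r+k-\ell}{r}\binom{s+k-\ell}{s} \;=\; \binom{r}{k}\binom{s}{k}.$$

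To dispatch this finite identity I would convert the binomial coefficients into Pochhammer symbols via $\binom{r}{k} = (-1)^k(-r)_k/k!$ and its kin. After reindexing with $m = k - \ell$, the left-hand side becomes a Saalsch\"utzian (balanced) $_3F_2$ at unit argument; the Pfaff--Saalsch\"utz summation formula then evaluates it in closed form as a ratio of Pochhammer symbols, and a brief conversion back to factorials recovers $\binom{r}{k}\binom{s}{k}$. An elementary alternative, bypassing the hypergeometric machinery, is an induction on $k$ using Pascal's recurrence applied to each of the three binomials; this works but requires noticeably heavier bookkeeping.

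The main obstacle is the finite alternating sum in the last step: the necessary cancellation is not apparent from Vandermonde-type manipulations alone, and Pfaff--Saalsch\"utz is precisely the tool that exploits the balanced hypergeometric structure to produce it in one stroke. Everything preceding that step is routine generating-function bookkeeping.
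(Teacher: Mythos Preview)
Your plan is sound. The generating-function step is routine, and the finite alternating sum you arrive at is indeed a balanced terminating ${}_3F_2$: with summation index $\ell$, the term ratio gives numerator parameters $-k,-k,-r-s-1$ and denominator parameters $-r-k,-s-k$, and one checks $(-r-k)+(-s-k)=(-k)+(-r-s-1)+(-k)+1$, so Pfaff--Saalsch\"utz applies and yields $\binom{r}{k}\binom{s}{k}$ after the Pochhammer-to-factorial conversion. (The reindexing $m=k-\ell$ is not actually needed; the series is already in Saalsch\"utzian form.)

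As for the comparison: the paper does not prove this lemma at all --- it simply cites \cite[Corollary 2]{szekely}. So your approach is not a different route so much as an actual route where the paper offers none. What you gain is self-containment; what the citation buys is brevity, since the identity is classical and a reference suffices for the paper's purposes. If you wanted to match the paper's level of detail here, a one-line reference to any standard source for Pfaff--Saalsch\"utz (e.g.\ Andrews--Askey--Roy or Graham--Knuth--Patashnik) would be equally acceptable.
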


\begin{proof}
	See \cite[Corollary 2]{szekely}.
\end{proof}

\begin{lema}[Double Hockey-Stick Identity]\label{doublestick}
	\[ \sum_{j=0}^m \binom{r+j}{j}\binom{s+j}{j} = \sum_k \binom{r}{k}\binom{s}{k}\binom{r+s+1+m-k}{r+s+1}\]
\end{lema}

\begin{proof}
	We proceed using Sur\'anyi's Identity:
		\begin{align*}
			\sum_{j=0}^m \binom{r+j}{j}\binom{s+j}{j} &= \sum_{j=0}^m \binom{r+j}{r}\binom{s+j}{s}\\
			&= \sum_{j=0}^m \sum_{k=0}^s \binom{r}{k}\binom{s}{k}\binom{j+r+s-k}{r+s}\\
			&=\sum_{k=0}^s \binom{r}{k}\binom{s}{k} \sum_{j=0}^m \binom{j+r+s-k}{r+s}\\
			&=\sum_{k=0}^m \binom{r}{k}\binom{s}{k} \binom{r+s+1+m-k}{r+s+1},
		\end{align*}
	where in the last step we used the classic Hockey-Stick identity.
\end{proof}

\section*{Acknowledgements}

The author wants to thank his Ph.D supervisor, Prof. Luca Moci, and the reviewers for the careful reading and helpful comments to improve many aspects of this article, and to Kolja Knauer for several useful comments regarding snake matroids.

\bibliographystyle{plain}
\bibliography{bibliopaper}

\begin{thebibliography}{10}

\bibitem{ardila}
Federico Ardila, Carolina Benedetti, and Jeffrey Doker.
\newblock Matroid polytopes and their volumes.
\newblock {\em Discrete Comput. Geom.}, 43(4):841--854, 2010.

\bibitem{beckjochemko}
Matthias Beck, Katharina Jochemko, and Emily McCullough.
\newblock $h^\ast$-polynomials of zonotopes.
\newblock {\em Transactions of the American Mathematical Society},
  371:2021--2042, 2019.

\bibitem{beck}
Matthias Beck and Sinai Robins.
\newblock {\em \normalfont{Computing the {C}ontinuous {D}iscretely:
  {I}nteger-{P}oint {E}numeration in {P}olyhedra}}.
\newblock Undergraduate Texts in Mathematics. Springer-Verlag, New York, 2015.

\bibitem{bonin}
Joseph Bonin, Anna de~Mier, and Marc Noy.
\newblock Lattice path matroids: enumerative aspects and tutte polynomials.
\newblock {\em Journal of Combinatorial Theory, Series A}, 104(1):63 -- 94,
  2003.

\bibitem{branden2}
Petter Br{\"a}nd{\'e}n.
\newblock On operators on polynomials preserving real-rootedness and the
  {N}eggers-{S}tanley conjecture.
\newblock {\em Journal of Algebraic Combinatorics}, 20(2):119--130, Sep 2004.

\bibitem{branden1}
Petter Br\"{a}nd\'{e}n.
\newblock Unimodality, log-concavity, real-rootedness and beyond.
\newblock In {\em Handbook of enumerative combinatorics}, Discrete Math. Appl.
  (Boca Raton), pages 437--483. CRC Press, Boca Raton, FL, 2015.

\bibitem{branden3}
Petter Br\"and\'en and Liam Solus.
\newblock Symmetric decompositions and real-rootedness.
\newblock {\em International Mathematics Research Notices}, 2019.

\bibitem{fuliu}
Federico Castillo and Fu~Liu.
\newblock Berline-{V}ergne valuation and generalized permutohedra.
\newblock {\em Discrete Comput. Geom.}, 60(4):885--908, 2018.

\bibitem{latte}
Jes\'{u}s~A. De~Loera, D.~Haws, R.~Hemmecke, P.~Huggins, J.~Tauzer, and
  R.~Yoshida.
\newblock A user's guide for {\tt {l}att{e}}.
\newblock 2003.

\bibitem{deloera}
Jes\'{u}s~A. De~Loera, David~C. Haws, and Matthias K\"{o}ppe.
\newblock Ehrhart polynomials of matroid polytopes and polymatroids.
\newblock {\em Discrete Comput. Geom.}, 42(4):670--702, 2009.

\bibitem{dinolt}
George~W. Dinolt.
\newblock An extremal problem for non-separable matroids.
\newblock In {\em Th\'{e}orie des matro\"{\i}des ({R}encontre
  {F}ranco-{B}ritannique, {B}rest, 1970)}, pages 31--49. Lecture Notes in Math.
  Vol. 211. 1971.

\bibitem{feichtner}
Eva~Maria Feichtner and Bernd Sturmfels.
\newblock Matroid polytopes, nested sets and {B}ergman fans.
\newblock {\em Port. Math. (N.S.)}, 62(4):437--468, 2005.

\bibitem{ferroni}
Luis Ferroni.
\newblock Hypersimplices are {E}hrhart positive.
\newblock {\em Journal of Combinatorial Theory, Series A}, 178:105365, Feb
  2021.

\bibitem{GGMS}
I.~M. Gelfand, R.~M. Goresky, R.~D. MacPherson, and V.~V. Serganova.
\newblock Combinatorial geometries, convex polyhedra, and {S}chubert cells.
\newblock {\em Adv. in Math.}, 63(3):301--316, 1987.

\bibitem{knuth}
Ronald~L. Graham, Donald~E. Knuth, and Oren Patashnik.
\newblock {\em \normalfont{Concrete mathematics}}.
\newblock Addison-Wesley Publishing Company, Reading, MA, {S}econd edition,
  1994.
\newblock A foundation for computer science.

\bibitem{jochemko}
Katharina Jochemko.
\newblock {On the Real-Rootedness of the Veronese Construction for Rational
  Formal Power Series}.
\newblock {\em International Mathematics Research Notices},
  2018(15):4780--4798, 2017.

\bibitem{knauer2}
Kolja Knauer, Leonardo Mart\'inez-Sandoval, and Jorge Luis~Ram\'irez
  Alfons\'in.
\newblock On {L}attice {P}ath {M}atroids {P}olytopes: {I}nteger {P}oints and
  {E}hrhart {P}olynomial.
\newblock {\em Discrete Comput. Geom.}, 60:698--719, 2018.

\bibitem{knauer1}
Kolja Knauer, Leonardo Mart\'inez-Sandoval, and Jorge Luis~Ram\'irez
  Alfons\'in.
\newblock A {T}utte polynomial inequality for {L}attice {P}ath {M}atroids.
\newblock {\em Advances in Applied Mathematics}, 94:23 -- 38, 2018.
\newblock Special issue on the Tutte polynomial.

\bibitem{mills}
Allan~D. Mills.
\newblock On matroids with many common bases.
\newblock {\em Discrete Mathematics}, 203(1):195 -- 205, 1999.

\bibitem{murty}
{U. }{S. }{R. } Murty.
\newblock On the number of bases of a matroid.
\newblock {\em Proc. Second Lousiana Conf. on Combinatorics, Graph Theory and
  Computing}, 225:387--410, 1971.

\bibitem{oxley}
James~G. Oxley.
\newblock {\em \normalfont{Matroid theory}}.
\newblock Oxford Science Publications. The Clarendon Press, Oxford University
  Press, New York, 1992.

\bibitem{szekely}
L.~A. Sz\'{e}kely.
\newblock Common origin of cubic binomial identities; a generalization of
  {S}ur\'{a}nyi's proof on {L}e {J}en {S}hoo's formula.
\newblock {\em J. Combin. Theory Ser. A}, 40(1):171--174, 1985.

\bibitem{truemper}
K.~Truemper.
\newblock Alpha-balanced graphs and matrices and {G}{F}(3)-representability of
  matroids.
\newblock {\em Journal of Combinatorial Theory, Series B}, 32(2):112 -- 139,
  1982.

\end{thebibliography}

\end{document}